\newtheorem{theorem}{Theorem}[section]
\newtheorem{lemma}[theorem]{Lemma}
\newtheorem{proposition}[theorem]{Proposition}
\newtheorem{corollary}[theorem]{Corollary}
\theoremstyle{definition}
\newtheorem{definition}[theorem]{Definition}
\theoremstyle{remark}
\newtheorem{remark}[theorem]{Remark}
\newtheorem{example}[theorem]{Example}
\newtheorem*{acknowledgements}{Acknowledgements} 
\numberwithin{equation}{section}
\newtheorem*{prop}{Proposition}
\newtheorem*{thm}{Theorem}
\newtheorem*{cor}{Corollary}
\newcommand{\omu}{\overline{\mu}}
\begin{document} 

\title{Milnor invariants and edge-homotopy classification of clover links}
\author[Kodai Wada]{Kodai Wada} 
\address{Department of Mathematics, School of Education, Waseda University, Nishi-Waseda 1-6-1, Shinjuku-ku, Tokyo, 169-8050, Japan}
\email{k.wada@akane.waseda.jp}
\keywords{$C_{k}$-equivalence; Clover link; Edge-homotopy; Milnor invariant.}
\date{\today}
\maketitle
\begin{abstract} 
Given a clover link, we construct a bottom tangle by using a disk/band surface of the clover link.
Since the Milnor number is already defined for a bottom tangle, we define the Milnor number 
for the clover link to be the Milnor number for the bottom tangle and show that for a clover link, if Milnor numbers of length $\leq k$ vanish, then Milnor numbers of length $\leq 2k+1$ are well-defined.
Moreover we prove that two clover links whose Milnor numbers of length $\leq k$ vanish are equivalent 
up to edge-homotopy and $C_{2k+1}$-equivalence if and only if those Milnor numbers of length 
$\leq 2k+1$ are equal.
In particular, we give an edge-homotopy classification of $3$-clover links by their Milnor numbers of length $\leq 3$.
\end{abstract} 

\section{Introduction}
In 1954, J. Milnor \cite{M} introduced a concept of {\it link-homotopy} which is a weaker equivalence relation than link type, where link-homotopy is an equivalence relation generated by crossing changes on the same component.
And he \cite{M}, \cite{M2} defined {\it Milnor $\omu$-invariants} which are given as follows.
(See Subsection \ref{Milnor invariants} for detail definitions.)
Let $L$ be an oriented ordered $n$-component link in $S^{3}$.
The {\it Milnor number $\mu_{L}(I)$} is an integer determined by a finite sequence $I$ of numbers in $\{ 1,2,\ldots,n\}$.
Let $\Delta_{L}(I)$ be the greatest common devisor of $\mu_{L}(J)$'s, where $J$ is obtained from proper subsequence of $I$ by permuting cyclicly.
The Milnor $\omu$-invariant $\omu_{L}(I)$ is the residue class of $\mu_{L}(I)$ modulo $\Delta_{L}(I)$.
The length of $I$ is called the {\it length} of $\omu_{L}(I)$ and denoted by $|I|$.

Milnor \cite{M} gave a link-homotopy classification for 2- or 3-component links by Milnor $\omu$-invariants.
In 1988, J. P. Levine \cite{L2} gave a link-homotopy classification for 4-component links.
In 1990, N. Habegger and X. S. Lin \cite{HL} gave an algorithm which determines if two links with arbitrarily many components are link-homotopic.
In \cite{HL}, they defined Milnor numbers for string links which are similarly defined as links and proved that Milnor numbers are invariants for string links.
Moreover they showed that Milnor numbers give a link-homotopy classification for string links with arbitrarily many components.
We remark that Milnor numbers are complete link-homotopy invariants for string links, but Milnor $\omu$-invariants are not strong enough to classify for links up to link-homotopy.

An embedded graph in the $3$-sphere $S^{3}$ is called a {\it spatial graph}.
Let $C_{n}$ be a graph consisting of $n$ loops, each loop connected to a vertex by an edge.
We call a spatial graph of $C_{n}$ an {\it $n$-clover link} in $S^{3}$ \cite{L}.
Given an $n$-clover link $c$, we construct an $n$-component bottom tangle $\gamma_{F_{c}}$ by using a {\em disk/band surface} $F_{c}$ of $c$.
In \cite{L}, Levine defined the Milnor number for a bottom tangle.
Therefore we define the {\it Milnor number $\mu_{c}$ for an $n$-clover link $c$} to be the Milnor number $\mu_{\gamma_{F_{c}}}$. 
In \cite{L}, a bottom tangle is called a string link.
(The name \lq bottom tangle\rq ~follows K. Habiro \cite{H2}.)
In this paper, we mean that a string link is one defined in \cite{HL}.
There is a one-to-one correspondence between the sets of string links and bottom tangles. (See subsection~\ref{tangles}.)
This correspondence naturally induces the one-to-one correspondence between
the Milnor number for the bottom tangles and the Milnor number for the string links.

We remark that there are infinitely many choices of $\gamma_{F_c}$ for $c$, 
and hence that, in general, $\mu_c$ is not an invariant for $c$. But under a certain condition, 
$\mu_c$ is well-defined as follows.

\begin{thm}[Theorem \ref{well-definedness}]
Let $c$ be an $n$-clover link and $l_{c}$ a link which is the disjoint union of loops of $c$.
If $\omu_{l_{c}}(J)=0$ for any sequence $J$ with $|J|\leq k$, then $\mu_{c}(I)$ is well-defined for any sequence $I$ with $|I|\leq 2k+1$.
\end{thm}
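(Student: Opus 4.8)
The plan is to show that the ambiguity in choosing the disk/band surface $F_c$ is controlled by the Milnor invariants of the loop link $l_c$, and then invoke the known indeterminacy behaviour of Milnor numbers for bottom tangles (string links). First I would recall from the construction in \cite{L} that any two disk/band surfaces $F_c, F_c'$ for the same clover link $c$ differ by a finite sequence of local moves: adding/deleting trivial bands, sliding a band over another band, and passing a band over a loop (equivalently, handle slides and isotopies of the surface rel the loops). The induced bottom tangles $\gamma_{F_c}$ and $\gamma_{F_c'}$ therefore differ by corresponding moves. The crucial observation is that the component of $\gamma_{F_c}$ associated to the $i$th loop is, up to the operations just listed, a parallel copy (with some framing) of the $i$th loop of $l_c$ pushed off along $F_c$; hence changing $F_c$ changes $\gamma_{F_c}$ by a band-slide whose "correction term'' is expressed through longitudes of $l_c$.

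Next I would set up the algebraic bookkeeping. Using the Artin/Milnor presentation, the longitudes of $\gamma_{F_c}$ live in the lower central series quotients of the free group on meridians, and the effect of a single band-slide on a longitude is multiplication by a conjugate of another longitude. Iterating the Magnus expansion, a band-slide alters $\mu_{\gamma_{F_c}}(I)$ by an expression that is polynomial in the lower-length Milnor numbers together with the Milnor numbers $\mu_{l_c}(J)$ of the loop link. I would make precise the key numerical claim: if $|I|\le 2k+1$, then every such correction term is a sum of products in which at least one factor is some $\mu_{l_c}(J)$ with $|J|\le k$. The counting here is the analogue of Milnor's original indeterminacy estimate: a length-$(2k{+}1)$ invariant splits, under the relevant shuffle/commutator identities, into pieces each of which "sees'' a sublink word of length at most $k$ coming from the loops. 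Granting the hypothesis $\omu_{l_c}(J)=0$ for $|J|\le k$ — and noting that vanishing of these invariants for all such $J$ forces the honest integers $\mu_{l_c}(J)$ to vanish for $|J|\le k$ as well, since the indeterminacy $\Delta_{l_c}(J)$ is built from strictly shorter invariants which vanish inductively — all correction terms are zero, so $\mu_{\gamma_{F_c}}(I)=\mu_{\gamma_{F_c'}}(I)$ for $|I|\le 2k+1$.

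The main obstacle I expect is the second step: proving that the band-slide correction to a length-$(2k{+}1)$ Milnor number really is divisible, term by term, into factors one of which is a length-$\le k$ Milnor number of $l_c$. This requires a careful Magnus-expansion computation and an induction on length, simultaneously tracking (a) that the lower-length $\mu_{\gamma_{F_c}}(I')$ are already well-defined (so they may be used as coefficients) and (b) that the "core'' of each correction word is a genuine subword coming from the loops rather than from the connecting edges. I would organize this as a lemma computing the change of longitudes under a single elementary surface move, then bootstrap. A secondary, more routine obstacle is checking that the finite list of elementary moves relating two disk/band surfaces is complete — this is a handle-calculus fact about surfaces with boundary a fixed link, which I would cite or prove by a standard Reidemeister-type argument for banded surfaces.

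Finally, I would assemble the pieces: fix $c$ with $\omu_{l_c}(J)=0$ for $|J|\le k$; for any two choices $F_c,F_c'$, decompose the relating isotopy/handle-slide sequence into elementary moves; apply the lemma repeatedly to conclude $\mu_{\gamma_{F_c}}(I)=\mu_{\gamma_{F_c'}}(I)$ for all $I$ with $|I|\le 2k+1$; hence $\mu_c(I):=\mu_{\gamma_{F_c}}(I)$ is independent of the choice and thus well-defined. This completes the proof.
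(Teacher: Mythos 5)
Your overall architecture matches the paper's: reduce the ambiguity of $F_c$ to a finite list of elementary moves on disk/band surfaces, compute the effect on longitudes via the Magnus expansion, and use the hypothesis (correctly upgraded from $\omu_{l_c}(J)=0$ to $\mu_{l_c}(J)=0$ by induction on length) to kill the corrections. But there are two genuine gaps. First, your list of elementary moves is not the right one: handle slides and adding/deleting trivial bands do not preserve the property of being a disk/band surface of the \emph{fixed} spatial graph $c$, since such a surface must deformation retract to $c$. The actual ambiguity is adding full twists to bands together with a single pure-braid insertion of the stems near the root disk, and establishing this is itself nontrivial --- the paper proves it (Lemma \ref{lemma1} and Proposition \ref{propPB-move}) by analyzing the Reidemeister moves for spatial graphs, in particular showing that the vertex move (iv) can be traded for a single B-move; this is precisely where the non-flat-vertex hypothesis is dealt with. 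Your proposal treats this step as a routine citation about ``surfaces with boundary a fixed link,'' which is a different (and here inapplicable) classification problem.

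Second, and more seriously, your model of the effect of a move on a longitude --- ``multiplication by a conjugate of another longitude'' --- is inconsistent with your own key numerical claim and would not give the $2k+1$ bound. If $\lambda_i$ were multiplied by a conjugate of some $\lambda_j$, the correction to $E(\lambda_i)$ would be of degree $\geq k$ (linear in longitude coefficients), so well-definedness would already fail at length $k+1$. The crucial point in the paper's Lemma \ref{lemma2} is that the inserted tangle (the SL-move) is a \emph{doubled} string link whose strands come in oppositely oriented parallel pairs; consequently the conjugating element $u_i$ is a product of conjugates of commutators $[\lambda_i',\alpha_i']$, so $E(u_i)=1+\mathcal{O}(k+1)$, the conjugation $u_i\lambda_i u_i^{-1}$ contributes corrections of degree $\geq (k+1)+k=2k+1$, and the induced change of meridians $\alpha_i=u_i^{-1}\alpha_i'u_i$, with $E_Y(\alpha_i)=E_Y(\alpha_i')+\mathcal{O}(k+2)$, contributes $\mathcal{O}(k+(k+2)-1)=\mathcal{O}(2k+1)$ to the rewritten longitude. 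It is exactly this quadratic (commutator) structure that makes every correction term a product of two longitude coefficients, forcing one factor to have length $\leq k$ --- which is what your numerical claim requires but your stated move model does not supply. Without identifying this structure the argument does not close.
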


In 1988, Levine \cite{L} already defined Milnor numbers for {\it flat vertex} clover links and proved the same result as the theorem above, where a flat vertex spatial graph is a spatial graph $\Gamma$, for each vertex $v$ of $\Gamma$, there exist a neighborhood $B_{v}$ of $v$ and a small flat plane $P_{v}$ such that $\Gamma\cap B_{v}\subset P_{v}$ \cite{Yamada}.
In this paper, we do not assume that clover links are flat vertex ones. 
And we consider that two clover links are {\em equivalent} if 
they are ambient isotopic.

By using Milnor numbers for clover links, we have the following results for an edge-homotopy classification of clover links, where edge-homotopy is an equivalence relation generated by crossing changes on the same spatial edge.
This equivalence relation was introduced by K. Taniyama \cite{T} as a generalization of link-homotopy.

\begin{thm}[Theorem \ref{mainthm}]
Let $c, c'$ be two $n$-clover links and $l_{c}, l_{c'}$ links which are
disjoint unions of loops of $c, c'$ respectively.
Suppose that $\omu_{l_{c}}(J)= \omu_{l_{c'}}(J)=0$ for any sequence $J$ with $|J| \leq k$. 
Then $c$ and $c'$ are (edge-homotopy$+C_{2k+1}$)-equivalence if and only if $\mu_c(I)= \mu_{c'}(I)$ for any non-repeated sequence $I$ with $|I| \leq 2k+1$, where (edge-homotopy$+C_{2k+1}$)-equivalence is an equivalence relation obtained by combining edge-homotopy and $C_{2k+1}$-equivalence which is defined by Habiro \cite{H}.
\end{thm}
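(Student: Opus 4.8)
The strategy is to translate the statement, via the disk/band surface construction, into a classification of bottom tangles up to link-homotopy and $C_{2k+1}$-moves, and then to apply Milnor's theory of string links together with Habiro's clasper calculus. I would establish two facts and combine them. The first is a \emph{dictionary}: for $n$-clover links $c,c'$, one has $c$ and $c'$ (edge-homotopy$+C_{2k+1}$)-equivalent if and only if $\gamma_{F_c}$ and $\gamma_{F_{c'}}$ are equivalent up to link-homotopy and $C_{2k+1}$-moves, for conveniently chosen disk/band surfaces; here Theorem~\ref{well-definedness} and the hypothesis $\omu_{l_c}(J)=\omu_{l_{c'}}(J)=0$ for $|J|\le k$ guarantee that the relevant Milnor numbers do not feel the choice. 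The second is a \emph{bottom tangle classification}: two $n$-component bottom tangles are equivalent up to link-homotopy and $C_{2k+1}$-moves if and only if their non-repeated Milnor numbers of length $\le 2k+1$ agree. Granting these, the ``only if'' part of the theorem follows by running the dictionary from $c,c'$ to $\gamma_{F_c},\gamma_{F_{c'}}$, applying the easy implication of the classification (which is unconditional, since there is no indeterminacy at the bottom tangle level), and using well-definedness to descend to $\mu_c,\mu_{c'}$; the ``if'' part follows by using well-definedness to see that $\gamma_{F_c}$ and $\gamma_{F_{c'}}$ share their non-repeated Milnor numbers of length $\le 2k+1$, then applying the hard implication of the classification, then the dictionary back.

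For the bottom tangle classification, modulo link-homotopy the $n$-component bottom tangles form a nilpotent group on which, by Habegger--Lin \cite{HL}, the non-repeated Milnor numbers are faithful coordinates, the numbers of length $m$ detecting the $(m-1)$-st lower central quotient. A $C_m$-move changes only the Milnor numbers of length $\ge m+1$, hence represents an element of the $m$-th term of the lower central series, and by Habiro's calculus \cite{H} such moves generate that term; so declaring $C_{2k+1}$-moves trivial is precisely passing to the quotient detected by the non-repeated Milnor numbers of length $\le 2k+1$. (The corresponding ``only if'' — that $C_{2k+1}$-moves preserve those numbers — is also immediate since each is a Vassiliev invariant of degree $\le 2k$.) This package is close to results in the literature, which I would cite or adapt; it is not where the difficulty lies.

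The difficulty is the dictionary. One direction reverses the construction $c \mapsto \gamma_{F_c}$: given an equivalence of bottom tangles, one must realize it back on the clover links by ambient isotopy, edge-homotopies and $C_{2k+1}$-moves, checking in particular that a self-crossing change on a single string of $\gamma_{F_c}$ corresponds to a crossing change on a single spatial \emph{edge} of $c$, so that link-homotopy genuinely lifts to edge-homotopy. The other direction pushes an edge-homotopy or a $C_{2k+1}$-move on $c$ through the construction; the chosen disk/band surface must then be modified en route, and the resulting band slides and band twists on the bottom tangle must be absorbed into link-homotopy and $C_{2k+1}$-moves. This absorption is exactly what $\omu_{l_c}(J)=0$ for $|J|\le k$ buys, by the same order count that underlies Theorem~\ref{well-definedness}: because such a band meets the loop system essentially twice, a modification ``below level $k$'' affects Milnor numbers only in length $>2k+1$, so it is a composition of a link-homotopy and $C_{2k+1}$-moves. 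Carrying out this bookkeeping carefully — and, unlike Levine \cite{L}, without assuming flat vertices — is the technical heart; once it is in place, the theorem follows by combining the dictionary with the bottom tangle classification as above.
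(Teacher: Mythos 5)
Your proposal is correct and follows essentially the same route as the paper: your ``dictionary'' is the paper's Lemma~\ref{lemma3} (reducing (edge-homotopy$+C_{2k+1}$)-equivalence of the clover links to (link-homotopy$+C_{2k+1}$)-equivalence of the associated bottom tangles), and your bottom-tangle classification is exactly what the paper extracts from Yasuhara's normal form \cite[Theorem 4.3]{Y} together with Habiro's Theorem 7.2 and the link-homotopy invariance of non-repeated Milnor numbers from \cite{HL}. The only cosmetic difference is that you sketch the classification abstractly via the lower central series of the Habegger--Lin group, which is precisely what Yasuhara's explicit product decomposition makes rigorous.
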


We also have the following proposition.
\begin{prop}[Corollary \ref{cor}]
Let $c, c'$ be two $n$-clover links and $l_{c}, l_{c'}$ links which are
disjoint unions of loops of $c, c'$ respectively.
Suppose that $\omu_{l_{c}}(J)= \omu_{l_{c'}}(J)=0$ for any sequence $J$ with $|J| \leq {n}/{2}$. Then $c$ and $c'$ are edge-homotopic if and only if $\mu_c(I)= \mu_{c'}(I)$ for any non-repeated sequence $I$ with $|I| \leq n$.
\end{prop}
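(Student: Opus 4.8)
The plan is to read this off Theorem~\ref{mainthm}. Set $k=\lfloor n/2\rfloor$. Since the length of an index sequence is an integer, the hypothesis that $\omu_{l_c}(J)=\omu_{l_{c'}}(J)=0$ for all $J$ with $|J|\le n/2$ is precisely the hypothesis of Theorem~\ref{mainthm} for this $k$, and one has $2k+1\ge n$ (with $2k+1=n$ when $n$ is odd and $2k+1=n+1$ when $n$ is even). Because a non-repeated sequence of indices in $\{1,\dots,n\}$ automatically has length at most $n\le 2k+1$, the condition ``$\mu_c(I)=\mu_{c'}(I)$ for every non-repeated $I$ with $|I|\le 2k+1$'' appearing in Theorem~\ref{mainthm} is the same as the condition ``$\mu_c(I)=\mu_{c'}(I)$ for every non-repeated $I$ with $|I|\le n$'' of the proposition. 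So the proposition amounts to the assertion that, for $n$-clover links, $(\text{edge-homotopy}+C_{2k+1})$-equivalence coincides with edge-homotopy; and since $2k+1\ge n$, this follows once we show that \emph{every $C_m$-move with $m\ge n$ carried out on an $n$-clover link is realized by an edge-homotopy}.

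To prove this last statement I would pass to the associated bottom tangle. A $C_m$-move is surgery along a simple tree clasper of degree $m$, with $m+1$ leaves each encircling a strand of the clover link; transferring the picture through a disk/band surface $F_c$ to the bottom tangle $\gamma_{F_c}$, which has exactly $n$ components, it becomes a $C_m$-move on $\gamma_{F_c}$ whose $m+1\ge n+1$ leaves are distributed among only $n$ components. By the pigeonhole principle two of these leaves lie on one component $K_i$. One then invokes the clasper-calculus fact that surgery along a tree clasper, two of whose leaves lie on a single component, is realized by crossing changes supported on that component, i.e.\ by self-crossing changes of $K_i$. Finally a self-crossing change of the $i$-th component of $\gamma_{F_c}$ is induced by crossing changes on the $i$-th loop of $c$, and such crossing changes are edge-homotopy moves on $c$. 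Together with Theorem~\ref{mainthm}, this gives the proposition.

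I expect the genuine work to lie in the two transfer steps, and the second to be the main obstacle. First, one must check that a $C_m$-move on $c$ really induces on $\gamma_{F_c}$ a $C_m$-move (or, at worst, an edge-homotopy combined with such moves), so that the leaf count may legitimately be performed on $\gamma_{F_c}$ --- a compatibility between clasper surgery and the construction of $\gamma_{F_c}$ from $F_c$. Second, one needs the clasper lemma that a tree clasper with two leaves on a single component can be absorbed into self-crossing changes of that component; this is the mechanism by which ``repeated-index'' data is invisible to edge-homotopy, and pinning it down precisely --- together with verifying that the resulting self-crossing changes on $\gamma_{F_c}$ pull back to edge-homotopy moves on $c$ --- is the crux. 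A possible alternative to this route would be to establish the italicized statement above directly, by combining the Habegger--Lin link-homotopy classification of string links with the well-definedness of $\mu_c$ furnished by Theorem~\ref{well-definedness}.
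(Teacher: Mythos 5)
Your proposal is correct and follows essentially the same route as the paper: Theorem~\ref{mainthm} with $k=\lfloor n/2\rfloor$, combined with the observation that a $C_m$-move with $m\ge n$ on an $n$-clover link is absorbed into edge-homotopy by the pigeonhole principle together with the clasper lemma (the paper cites \cite[Lemma 1.2]{FY}) that a simple tree with two disk-leaves on a single component is realized by self $C_1$-moves. The only difference is that the paper sidesteps the first ``transfer step'' you flag as a concern by working directly on $l_c$ rather than on $\gamma_{F_c}$: since the union of the stems and the root is contractible, a $C_m$-tree for $c$ may be assumed to meet only the leaves, and the zip construction turns it into a disjoint union of simple $C_m$-trees for the $n$-component link $l_c$, after which the pigeonhole count and the self-$C_1$ reduction proceed exactly as you describe.
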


It is the definition that the Milnor $\omu$-invariant of length $1$ is zero.
If $n=3$, then the proposition above holds without the condition.
\begin{cor}[Corollary \ref{corn=3}]
Two 3-clover links $c$ and $c'$ are edge-homotopic if and only if $\mu_{c}(I)=\mu_{c'}(I)$ for any non-repeated sequence $I$ with $|I|\leq 3$.
\end{cor}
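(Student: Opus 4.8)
The plan is to derive the $n=3$ statement from Corollary \ref{cor} (the Proposition above) by verifying that the hypothesis $\omu_{l_c}(J)=\omu_{l_{c'}}(J)=0$ for $|J|\le n/2$ is automatic when $n=3$. Since $n/2 = 3/2$, the only sequences $J$ to worry about have $|J|=1$; and as the excerpt notes right before the statement, Milnor $\omu$-invariants of length $1$ are zero by definition (a length-one sequence is $J=(i)$, and $\mu_L(i)=0$ always). Hence the hypothesis is vacuously satisfied for \emph{every} pair of $3$-clover links, and Corollary \ref{cor} applies directly with $k=1$: $c$ and $c'$ are edge-homotopic iff $\mu_c(I)=\mu_{c'}(I)$ for all non-repeated $I$ with $|I|\le 3$.

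The one point that needs care is the logical dependency chain: Corollary \ref{cor} is itself stated as the $k=\lfloor n/2\rfloor$ case of Theorem \ref{mainthm}, but for $n=3$ we have $\lfloor 3/2\rfloor = 1$, and the ``(edge-homotopy$+C_{2k+1}$)-equivalence'' of Theorem \ref{mainthm} with $k=1$ is ``(edge-homotopy$+C_3$)-equivalence.'' So to land on plain edge-homotopy I would need to know that for $3$-clover links the $C_3$-move is already a consequence of edge-homotopy, or equivalently that edge-homotopy and (edge-homotopy$+C_3$)-equivalence coincide in this range — this is presumably exactly the content of how Corollary \ref{cor} is deduced from Theorem \ref{mainthm} in general (the relevant clover links have at most $n=3$ loops, and $C_{n+1}$-moves, here $C_4$-moves, do not change the relevant Milnor numbers, while $C_3$-moves are absorbed). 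I would therefore simply cite Corollary \ref{cor} for this step rather than re-prove it.

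Concretely, the write-up is three lines: (1) observe $n=3$ forces $n/2<2$, so the only constraint in Corollary \ref{cor} is on length-$1$ invariants; (2) recall $\omu_L(J)=0$ whenever $|J|=1$, so the hypothesis of Corollary \ref{cor} holds trivially; (3) conclude by Corollary \ref{cor} that $c\sim c'$ up to edge-homotopy iff $\mu_c(I)=\mu_{c'}(I)$ for all non-repeated $I$ with $|I|\le 3$.

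The main obstacle is not in this deduction — it is essentially immediate — but in making sure nothing is lost in the well-definedness of the invariants: one should note that for $n=3$-clover links, Theorem \ref{well-definedness} with $k=1$ (again using that length-$1$ $\omu$-invariants vanish) guarantees that $\mu_c(I)$ is well-defined for all $I$ with $|I|\le 3$, so the statement ``$\mu_c(I)=\mu_{c'}(I)$'' is meaningful. I would include that remark so the corollary is self-contained, and otherwise treat it as a direct specialization.
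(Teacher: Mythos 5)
Your proposal is correct and matches the paper's own (largely implicit) deduction: the hypothesis of Corollary \ref{cor} for $n=3$ only concerns sequences of length $\leq 3/2$, i.e.\ length $1$, and length-one $\omu$-invariants vanish by definition, so Corollary \ref{cor} applies unconditionally. Your additional remarks on the $C_3$-absorption step and on well-definedness via Theorem \ref{well-definedness} are accurate but already contained in the paper's derivation of Corollary \ref{cor} from Theorem \ref{mainthm}.
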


\begin{remark}
Let $c$ be a clover link and $l_c$ a link which is the disjoint union of loops of $c$. 
By the definition of $\mu_c$ in subsection \ref{Milnor invariants}, we note that 
$\mu_{c}(I)=0$ for any non-repeated sequence $I$ if and only if 
$\mu_{l_c}(I)=0$ for $I$.
And we also note that all Milnor numbers for a trivial clover link vanish, where a clover link is {\em trivial} if there is an embedded plane in $S^3$ which contains the clover link.
It is shown by Milnor~\cite{M} that a link is link-homotopic to a trivial link if and only if the Milnor $\omu$-invariant vanishes for any non-repeated sequence.
Hence, by the proposition above, we have that $c$ is edge-homotopic to a trivial clover link if and only if $l_{c}$ is link-homotopic to a trivial link.
\end{remark}


\begin{acknowledgements}
I would like to express my best gratitude to Professor Akira Yasuhara for his helpful advice and continuous encouragement.
I also would like to thank Professor Kokoro Tanaka for his useful comments.
\end{acknowledgements}

\section{Milnor numbers for clover links}
In this section we will define Milnor numbers for clover links.
\subsection{Tangles}
\label{tangles}
An $n$-component {\it tangle} is a properly embedded disjoint union of $n$ arcs in the $3$-cube $[0,1]^{3}$.
An $n$-component tangle $sl=sl_{1}\cup sl_{2}\cup \cdots \cup sl_{n}$ is an $n$-component {\it string link} if for each $i\ (=1,2,\ldots,n)$, the boundary $\partial sl_{i}=\{ (\frac{2i-1}{2n+1},\frac{1}{2},0), (\frac{2i-1}{2n+1},\frac{1}{2},1)\} \subset \partial [0,1]^{3}$.
In particular, $sl$ is {\it trivial} if for each $i\ (=1,2,\ldots,n)$, $sl_{i}=\{(\frac{2i-1}{2n+1},\frac{1}{2})\}\times[0,1]$ in $[0,1]^{3}$.

Product of $n$-component string links is defined as follows.
Let $sl=sl_{1}\cup sl_{2}\cup\cdots \cup sl_{n}$ and $sl'=sl'_{1}\cup sl'_{2}\cup\cdots \cup sl'_{n}$ be two string links in $[0,1]^{3}$.
Then the {\it product $sl*sl'=(sl_{1}*sl_{1}')\cup (sl_{2}*sl'_{2})\cup \cdots \cup (sl_{n}*sl'_{n})$} of $sl$ and $sl'$ is a string link in $[0,1]^{3}$ defined by 
\begin{center}
$sl_{i}*sl'_{i}=h_{0}(sl_{i})\cup h_{1}(sl'_{i})$
\end{center}
for $i=1,2,\ldots,n$, where $h_{0},h_{1}:([0,1]\times[0,1])\times[0,1]\rightarrow ([0,1]\times[0,1])\times[0,1]$ are embeddings defined by 
\begin{center}
$h_{0}(x,t)=(x,\frac{1}{2}t)$ and $h_{1}(x,t)=(x,\frac{1}{2}+\frac{1}{2}t)$
\end{center}
for $x\in([0,1]\times[0,1])$ and $t\in[0,1]$, see Figure~\ref{product}.

\begin{figure}[htpb]
 \begin{center}
  \begin{overpic}[width=10cm]{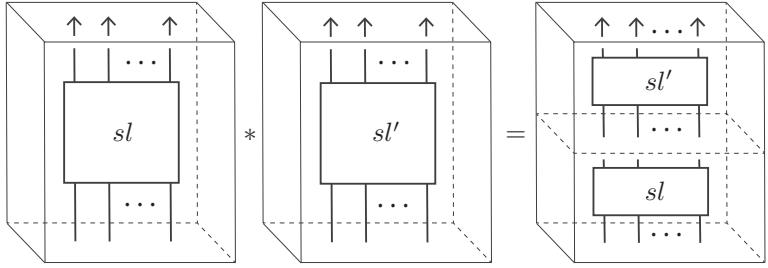}
     \put(40,46){$sl$}
     \put(137,46){$sl'$}
     \put(239,65.5){$sl'$}
     \put(239,24){$sl$}
     \put(89,46){$*$}
     \put(187,46){$=$}
  \end{overpic}
  \caption{A product of two string links $sl$ and $sl'$}
  \label{product}
 \end{center}
\end{figure}
\begin{figure}[htpb]
 \begin{center}
  \begin{overpic}[width=10cm]{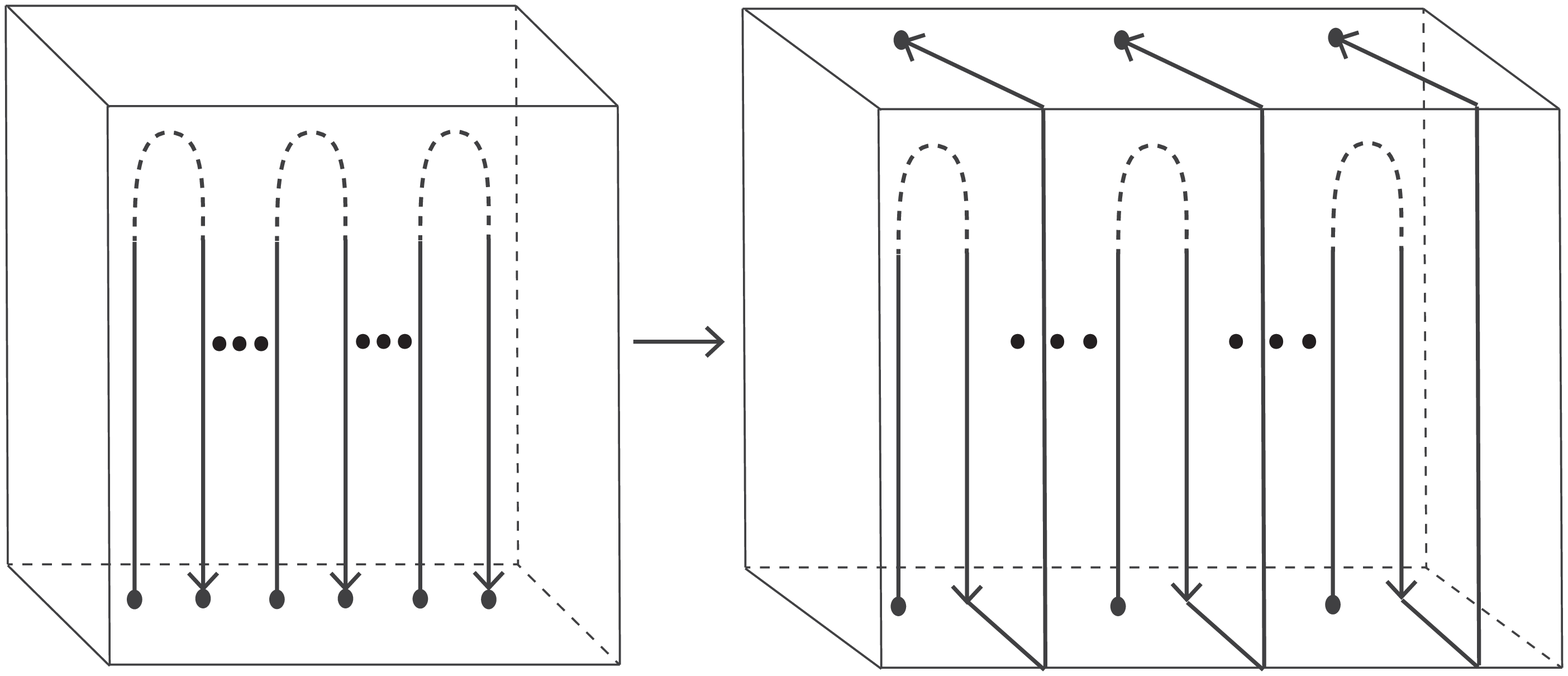}
     \put(5,-10){$\gamma=\gamma_{1}\cup\cdots\cup\gamma_{i}\cup\cdots\cup\gamma_{n}$}
     \put(22,5){$p_{1}$}
     \put(34,5){$q_{1}$}
     \put(48,5){$p_{i}$}
     \put(59,5){$q_{i}$}
     \put(73,5){$p_{n}$}
     \put(86,5){$q_{n}$}
     \put(162,5){$p_{1}$}
     \put(200,5){$p_{i}$}
     \put(239,5){$p_{n}$}
     \put(153,113){$q'_{1}$}
     \put(194,113){$q'_{i}$}
     \put(232,113){$q'_{n}$}
     \put(181,10){$b_{1}$}
     \put(221,10){$b_{i}$}
     \put(259,10){$b_{n}$}
     \put(191,50){$s_{1}$}
     \put(232,50){$s_{i}$}
     \put(271,50){$s_{n}$}
     \put(181,109){$t_{1}$}
     \put(220,109){$t_{i}$}
     \put(260,109){$t_{n}$}
  \end{overpic}
  \caption{A one-to-one correspondence between a string link and a bottom tangle}
  \label{bottomtangletostringlink}
 \end{center}
\end{figure}

An $n$-component {\it bottom tangle} $\gamma=\gamma_{1}\cup \gamma_{2}\cup \cdots \cup \gamma_{n}$ defined by Levine \cite{L} is a tangle with $\partial \gamma_{i}=\{ (\frac{2i-1}{2n+1},\frac{1}{2},0),(\frac{2i}{2n+1},\frac{1}{2},0) \}$ $\subset$ $\partial [0,1]^{3}$ 
 for each  $i\ (=1,2,\ldots,n)$.

We explain that there is a one-to-one correspondence between the sets of string links and bottom tangles in the following.
We first describe a construction of obtaining a string link from a bottom tangle $\gamma=\gamma_{1}\cup \gamma_{2}\cup \cdots \cup \gamma_{n}$.
For each $i\ (=1,2,\ldots, n)$, let $p_{i}$ and $q_{i}$ be the end points ($\frac{2i-1}{2n+1},\frac{1}{2},0$) and ($\frac{2i}{2n+1},\frac{1}{2},0$) of $\gamma_{i}$ respectively,
and let $q'_{i}$ be the point ($\frac{2i-1}{2n+1},\frac{1}{2},1$).
Let $b_{i}$ be a line segment between $q_{i}$ and ($\frac{2i}{2n+1},0,0$), $s_{i}$ a line segment between ($\frac{2i}{2n+1},0,0$) and ($\frac{2i}{2n+1},0,1$) and $t_{i}$ a line segment between ($\frac{2i}{2n+1},0,1$) and $q_{i}'$, see~Figure \ref{bottomtangletostringlink}.
By pushing the strands $\gamma_{i}\cup b_{i}\cup s_{i}\cup t_{i}$ into the interior of $[0,1]^{3}$ 
with fixing the end points $p_{i}$ and $q'_{i}$, we have a string link.

Conversely let $sl=sl_{1}\cup sl_{2}\cup \cdots \cup sl_{n}$ be a string link with $\partial sl_{i}=\{ p_{i},q'_{i}\}$ for each $i~(=1,2,\ldots,n)$.
By pushing the strands $sl_{i}\cup t_{i}\cup s_{i}\cup b_{i}$ into the interior of $[0,1]^{3}$ 
with fixing the end points $p_{i}$ and $q_{i}$, we have a bottom tangle.

\subsection{A bottom tangle obtained from a disk/band surface of a clover link}
Let $C_n$ be a graph consisting of $n$ oriented loops $e_{1},e_{2},\ldots ,e_{n}$, each loop $e_{i}$ connected to a vertex $v$ by an edge $f_{i}\ (i=1,2,\ldots,n)$ , see Figure~\ref{Cn-graph}.
An {\it $n$-clover link} in $S^3$ is a spatial graph of $C_n$ \cite{L}. 
The each part of a clover link corresponding to $e_{i}$, $f_{i}$ and $v$ of $C_n$ are 
called the {\it leaf}, {\it stem} and {\it root}, denoted by the same notations respectively.

\begin{figure}[htpb]
 \begin{center}
\vspace{-3mm}
  \begin{overpic}[width=5cm]{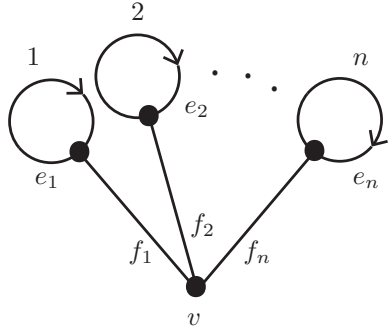}
     \put(7,108){1}
     \put(46,125){2}
     \put(129,108){$n$}
     \put(10,63){$e_{1}$}
     \put(66,90){$e_{2}$}
     \put(129,63){$e_{n}$}
     \put(45,35){$f_{1}$}
     \put(68,45){$f_{2}$}
     \put(88,35){$f_{n}$}
     \put(67,10){$v$}
  \end{overpic}
\vspace{-3mm}
  \caption{The graph $C_{n}$}
  \label{Cn-graph}
 \end{center}

\end{figure}

L. Kauffman, J. Simon, K. Wolcott and P. Zhao \cite{KSWZ} defined disk/band surfaces for spatial graphs.
For a spatial graph $\Gamma$, a {\it disk/band surface} $F_{\Gamma}$ of $\Gamma$ is a compact, oriented surface in $S^3$ such that $\Gamma$ is a deformation retract of $F_{\Gamma}$ contained in the interior of $F_{\Gamma}$. 
Note that any disk/band surface of a spatial graph is ambient isotopic to a surface constructed by putting a disk at each vertex of the spatial graph, connecting the disks with bands along the spatial edges.
We remark that for a spatial graph, there are infinitely many disk/band surfaces up to ambient isotopy.

Given an $n$-clover link, we construct an $n$-component bottom tangle using a disk/band surface of the clover link as follows.
\begin{itemize}
\item[(1)] For an $n$-clover link $c$, let $F_{c}$ be a disk/band surface of $c$ and let $D$ be a disk which contains the root.
From now on, we may assume that the intersection $D\cap \displaystyle\bigcup_{i=1}^{n}f_{i}$ and 
orientations of the disks are as illustrated in Figure~\ref{diskbandnotorikata}.

\item[(2)] Let $N(D)$ be the regular neighborhood of $D$.
\item[(3)] Since $S^{3}\setminus N(D)$ is homeomorphic to the $3$-ball, $F_{c}\setminus N(D)$ can be seen as a disjoint union of surfaces in the $3$-ball.
Hence $\partial F_{c}\setminus N(D)$ is a disjoint union of $n$-arcs and $n$-circles $\displaystyle\bigcup_{i=1}^{n}S_{i}^{1}$ in the $3$-ball.
\item[(4)] Since the $3$-ball is homeomorphic to $[0,1]^{3}$, we obtain an oriented ordered $n$-component bottom tangle $\gamma_{F_{c}}$ from $(\partial F_{c}\setminus N(D)) \setminus \displaystyle\bigcup_{i=1}^{n}S_{i}^{1}$ as illustrated in (3) and (4) of Figure~\ref{bottomtanglenokosei}.
We call $\gamma_{F_{c}}$ an {\it $n$-component  bottom tangle obtained from $F_{c}$}.
\end{itemize}
\begin{figure}[htpb]
 \begin{center}
  \begin{overpic}[width=10cm]{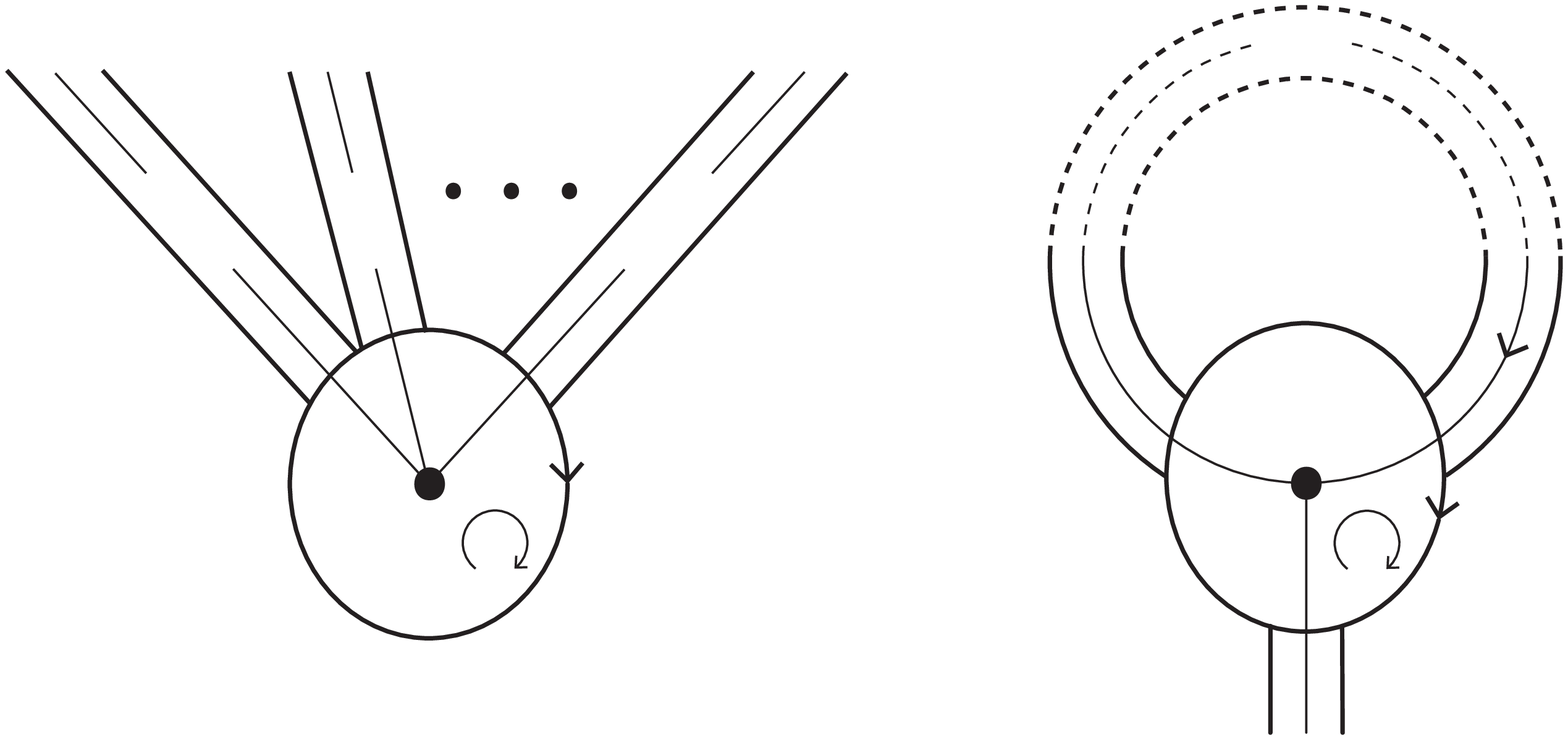}
     \put(32,90){$f_{1}$}
     \put(62,90){$f_{2}$}
     \put(114,90){$f_{n}$}
     \put(74,5){$D$}
     \put(234,125){$e_{i}$}
  \end{overpic}
  \caption{}
  \label{diskbandnotorikata}
 \end{center}
\end{figure}

\begin{figure}[htbp]
  \begin{center}
  \begin{overpic}[width=10cm]{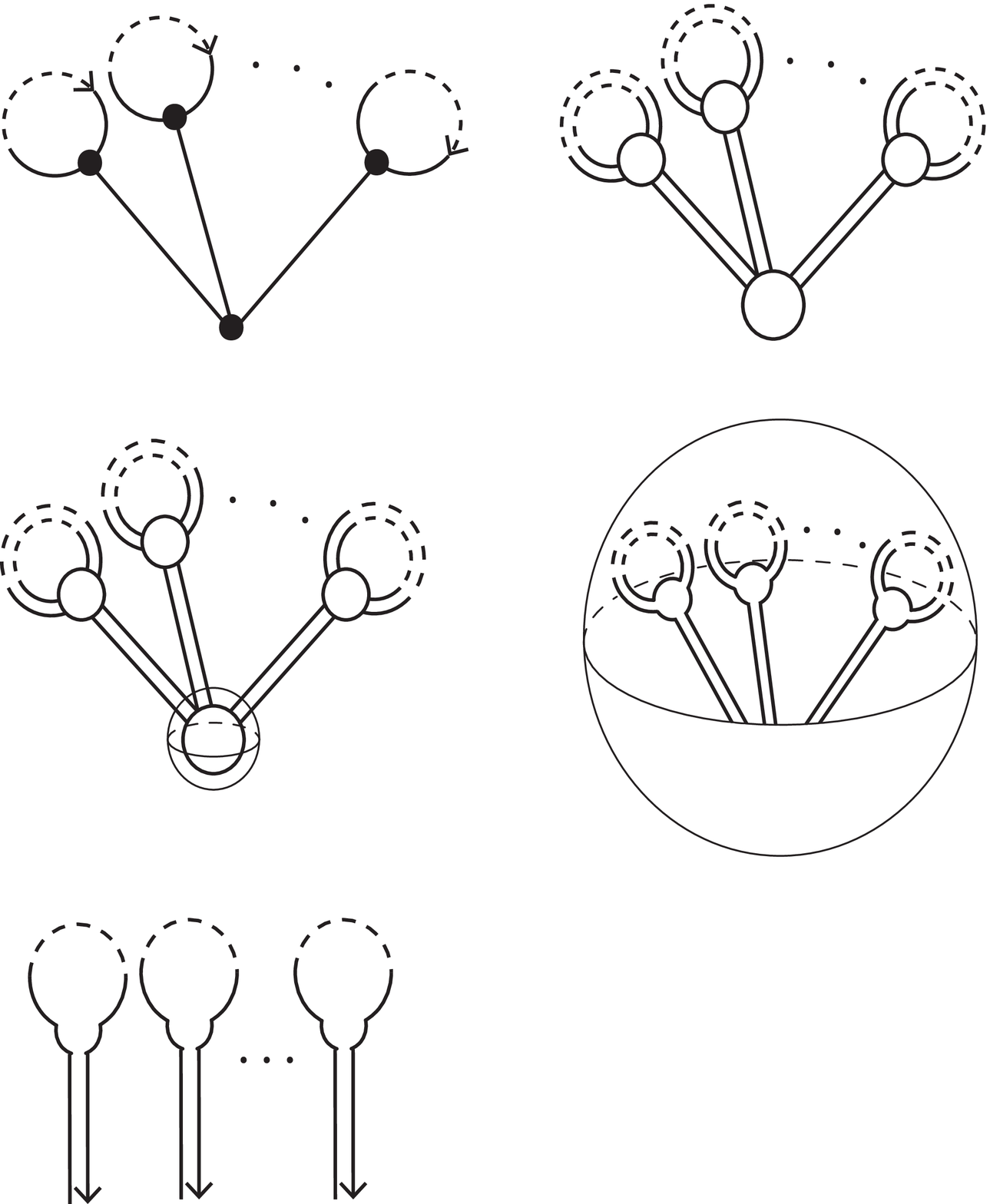}
   \put(1,325){1}
   \put(39,345){2}
   \put(124,328){$n$}
   \put(-25,285){$c$ :}
   \put(145,285){(1)}
   \put(235,253){$D$}
   \put(-25,155){(2)}
   \put(79,129){$N(D)$}
   \put(145,155){(3)}
   \put(200,90){$\partial F_{c}\setminus N(D)$}
   \put(-25,38){(4)}
   \put(20,85){1}
   \put(52,85){2}
   \put(96,85){$n$}
   \put(60,-5){$\gamma_{F_{c}}$}
\end{overpic}
    \caption{A method for obtaining a bottom tangle from a disk-band surface of a clover link}
    \label{bottomtanglenokosei}
  \end{center}
\end{figure}

\subsection{Milnor invariants}
\label{Milnor invariants}
Let us briefly recall from \cite{L} the definition of the Milnor number for a bottom tangle.
Let $\gamma=\gamma_{1}\cup\gamma_{2}\cup\cdots\cup\gamma_{n}$ be an oriented ordered $n$-component bottom tangle in $[0,1]^{3}$.
Let $G$ be the fundamental group of $[0,1]^{3}\setminus \gamma$ with a base point $p=(\frac{1}{2},0,0)$ and $G_{q}$ the $q$th lower central subgroup of $G$, namely $G_{1}=G$, $G_{q}$ is the subgroup generated by $\{ a^{-1}b^{-1}ab\ |\ a\in G, b\in G_{q-1}\}$.
Then the quotient group $G/G_{q}$ is generated by $\alpha_{1},\alpha_{2},\ldots,\alpha_{n}$~(\cite{Ch}, \cite{S}),
where $\alpha_{i}$ is the $i$th meridian of $\gamma$ which is represented by the composite path $t_{i}m_{i}{t_{i}}^{-1}$ in the $(x,y)$-plane, $m_{i}$ is a small counterclockwise circle about the point $p_{i}$ and $t_{i}$ is a straight line from $p_{i}$ to $m_{i}$, see Figure~\ref{meridian}.
Then the $i$th longitude $\lambda_{i}$ of $\gamma$ is represented by $\alpha_{1},\alpha_{2},\ldots,\alpha_{n}$ modulo $G_{q}$, where $\lambda_{i}$ is represented by the composite path $t_{i}l_{i}{t'_{i}}^{-1}$, $t'_{i}$ is a straight line from $p$ to the boundary of a small neighborhood of $q_{i}$ and $l_{i}$ is a path on the boundary of a small regular neighborhood of $\gamma_{i}$.
We assume that $\lambda_{i}$ is trivial in $G/G_{2}$.
See Figure~\ref{longitude}.
\begin{figure}[htbp]
\begin{minipage}{0.45\hsize}
  \begin{center}
    \begin{overpic}[width=5cm]{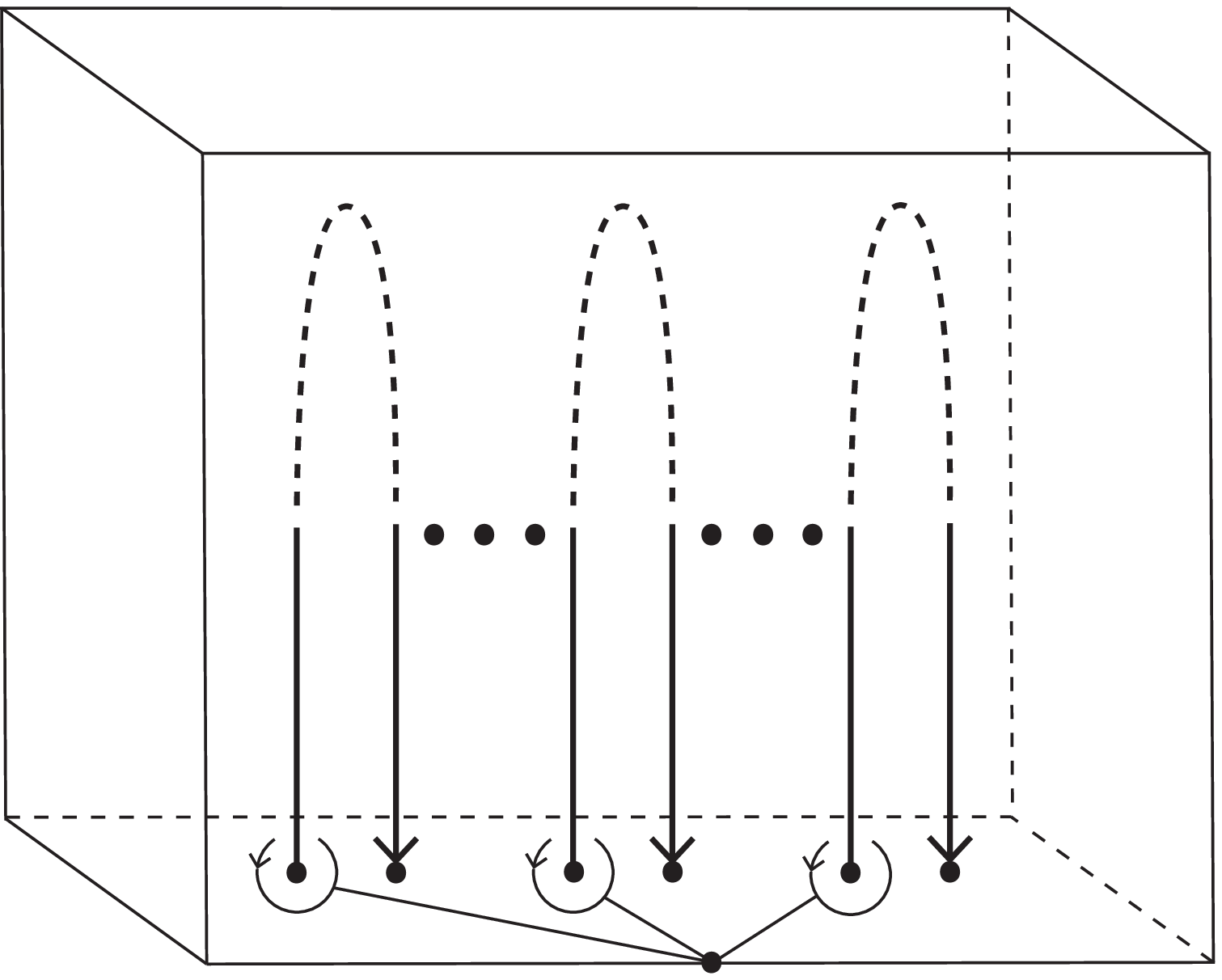}
       \put(26,10){$\alpha_{1}$}
       \put(52,18){$\alpha_{i}$}
       \put(102,10){$\alpha_{n}$}
       \put(81,0){$p$}
     \end{overpic}
  \end{center}
    \caption{meridians}
    \label{meridian}
\end{minipage}
\begin{minipage}{0.45\hsize}
 \begin{center}
    \begin{overpic}[width=5cm]{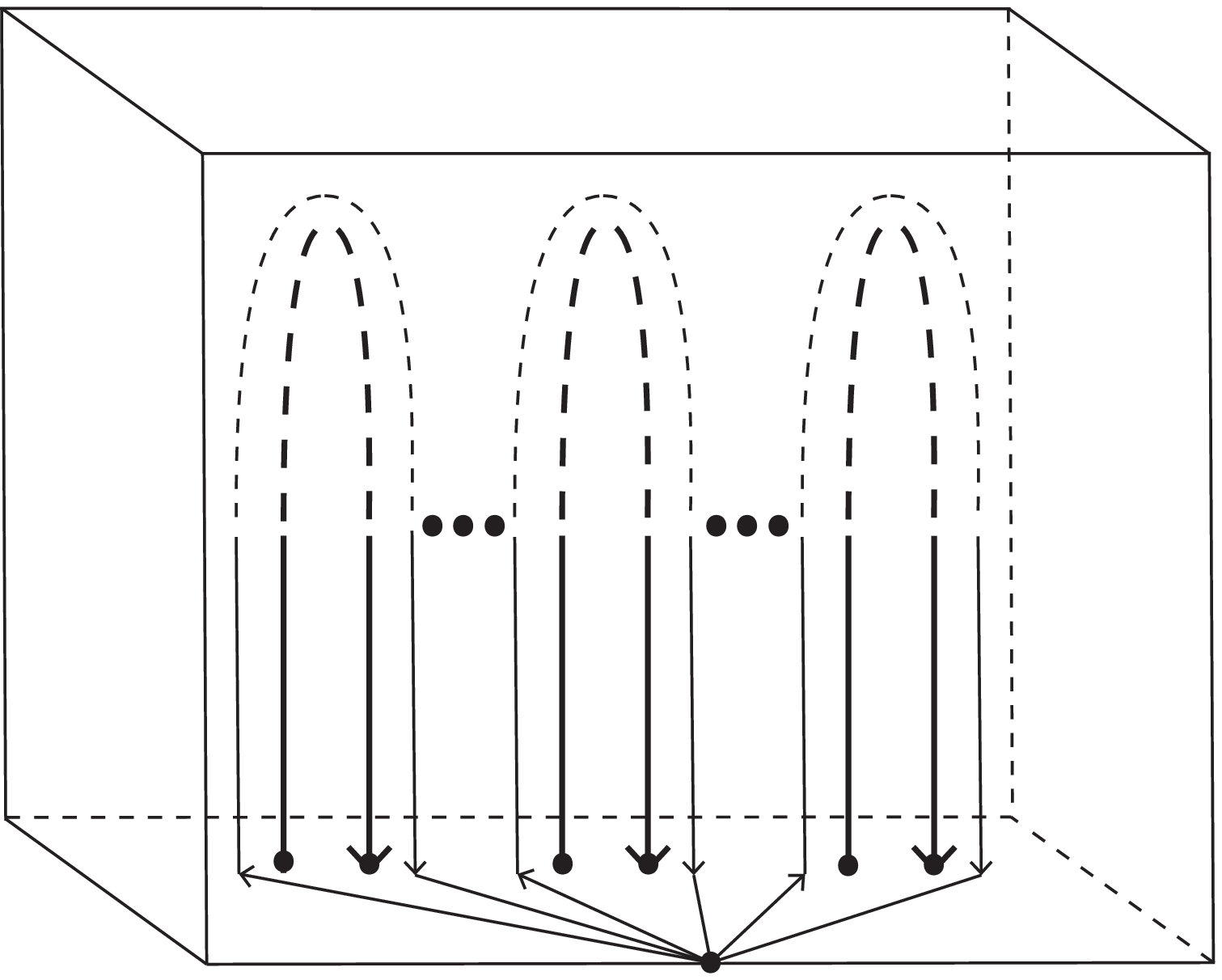}
       \put(25,10){$\lambda_{1}$}
       \put(51,28){$\lambda_{i}$}
       \put(112,10){$\lambda_{n}$}
       \put(81,0){$p$}
     \end{overpic}
  \end{center}
    \caption{longitudes}
    \label{longitude}
\end{minipage}
\end{figure}

We consider the Magnus expansion $E(\lambda_{j})$ of $\lambda_{j}$.
The Magnus expansion $E$ is a homomorphism from a free group $\langle \alpha_{1}, \alpha_{2}, \ldots , \alpha_{n}\rangle$ to the formal power series ring in non-commutative variables $X_{1},X_{2},\ldots,X_{n}$ with integer coefficients defined as follows.
$E(\alpha_{i})=1+X_{i},\ E(\alpha_{i}^{-1})=1-X_{i}+X_{i}^{2}-X_{i}^{3}+\cdots \ (i=1,2,\ldots , n)$.

For a sequence $I=i_{1}i_{2}\ldots i_{k-1}j\ (i_{m}\in \{1,2,\ldots,n\}, k\leq q)$, we define the {\it Milnor number} $\mu_{\gamma}(I)$ to be the coefficient of $X_{i_{1}}X_{i_{2}}\cdots X_{i_{k-1}}$ in $E(\lambda_{j})$ (we define $\mu_{\gamma}(j)=0$),
which is an invariant \cite{L}.
(In \cite{L}, the set of $\lambda_{j}$'s, without taking the Magnus expansion, is called the {\it Milnor $\omu$-invariant}.)
For a bottom tangle $\gamma=\gamma_{1}\cup\gamma_{2}\cup\cdots\cup\gamma_{n}$, an oriented ordered link $L(\gamma)=L_{1}\cup L_{2}\cup\cdots\cup L_{n}$ in $S^{3}$ can be defined by $L_{i}=\gamma_{i}\cup a_{i}$, where $a_{i}$ is a line segment connecting $p_{i}$ and $q_{i}$, see Figure~\ref{closure}.
\begin{figure}[htbp]
  \begin{center}
  \begin{overpic}[width=5cm]{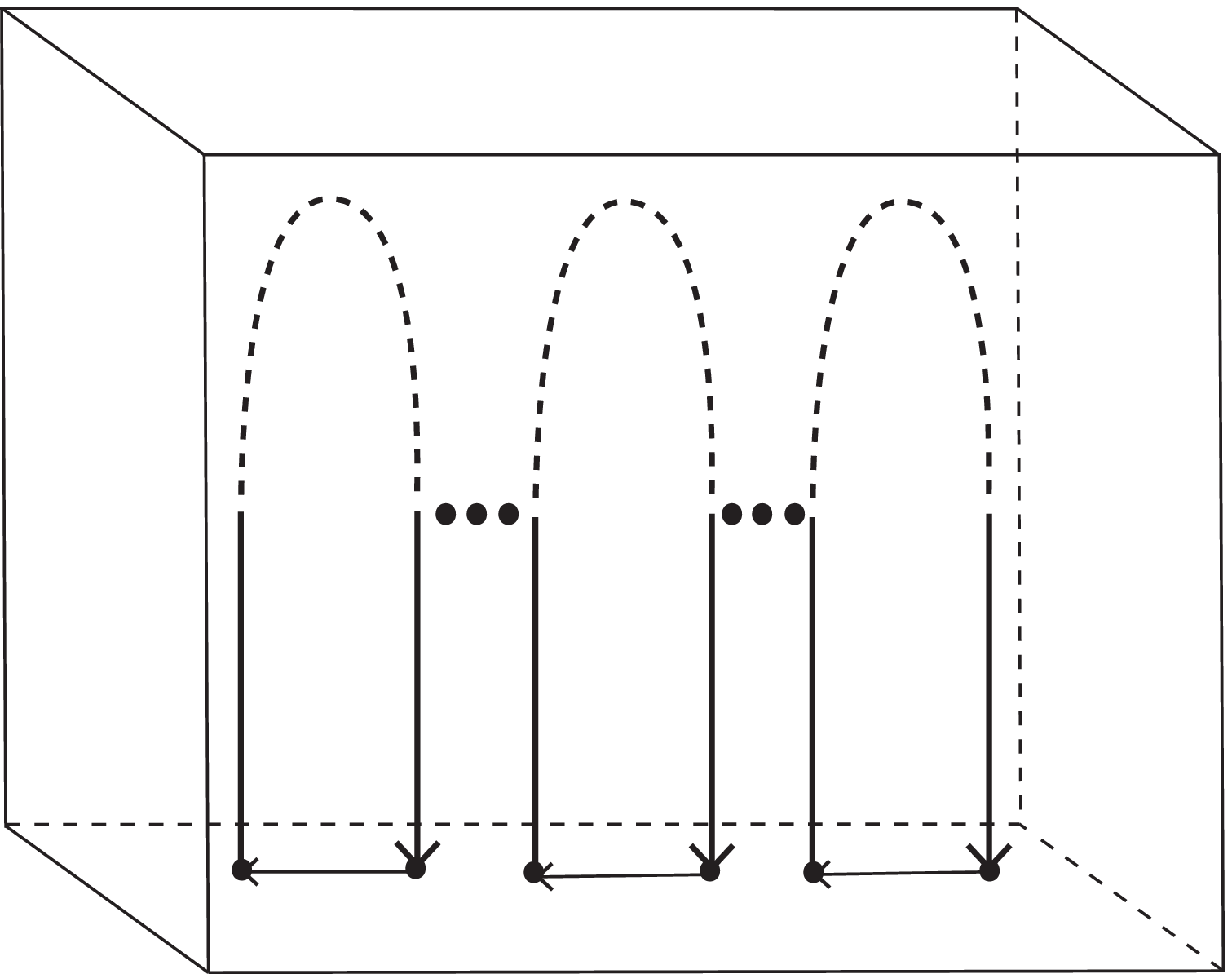}
   \put(35,5){$a_{1}$}
   \put(70,5){$a_{i}$}
   \put(100,5){$a_{n}$}
\end{overpic}
    \caption{}
    \label{closure}
  \end{center}
\end{figure}

We call $L(\gamma)$ the {\it closure} of $\gamma$.
On the other hand, for any link $L$ in $S^{3}$, there is a bottom tangle $\gamma_{L}$ such that the closure of $\gamma_{L}$ is equal to $L$.
So we define the Milnor number of $L$ to be the Milnor number of $\gamma_{L}$.
Let $\Delta_L(I)$ be the greatest common devisor of $\mu_L(J)'s$,
where $J$ is obtained from proper subsequence of $I$ by permuting cyclicly.
The {\it Milnor $\omu$-invariant $\omu_L(I)$} is the residue class of $\mu_L(I)$ modulo $\Delta_L(I)$.
We note that for a sequence $I$, if we have $\Delta_L(I)=0$, then the Milnor $\omu$-invariant $\omu_{L}(I)$ is equal to the Milnor number $\mu_{\gamma_{L}}(I)$.
Now we define the Milnor number for clover links.

\begin{definition}
Let $c$ be an $n$-clover link and $F_{c}$ a disk/band surface of $c$.
Let $\gamma_{F_{c}}$ be the $n$-component bottom tangle obtained from $F_{c}$.
For a sequence $I$, {\it the Milnor number $\mu_{c}(I)$ for $c$} is defined to be the Milnor number $\mu_{\gamma_{F_{c}}}(I)$.
\end{definition}

While $\mu_{c}(I)$ depends on a choice of $F_{c}$, we have the following result.

\begin{theorem}
\label{well-definedness}
Let $c$ be an $n$-clover link and $l_{c}$ a link which is the disjoint union of leaves of $c$.
If $\omu_{l_{c}}(J)=0$ for any sequence $J$ with $|J|\leq k$, then $\mu_{c}(I)$ is well-defined for any sequence $I$ with $|I|\leq 2k+1$.
\end{theorem}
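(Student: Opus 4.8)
The plan is to prove that for $|I|\le 2k+1$ the integer $\mu_c(I)=\mu_{\gamma_{F_c}}(I)$ is the same for every disk/band surface $F_c$ of $c$, by reducing to a single elementary change of $F_c$ and tracking its effect on the longitudes. Since $c$ is a deformation retract of $F_c$ lying in the interior of $F_c$, the surface $F_c$ is, up to ambient isotopy, a thickening of the spine $c$: a disk at the root plumbed onto annular neighborhoods of the leaves along band neighborhoods of the stems. With $c$ and the root disk $D$ in the standard position of Figure~\ref{diskbandnotorikata}, the only remaining moduli are the framings carried by the edges, and a framing change on a stem can be slid onto the adjacent leaf; so any two disk/band surfaces of $c$ are related by a finite sequence of ambient isotopies and ``leaf band-twists'', each supported in a small ball meeting $c$ in an arc of a single leaf $e_j$ (this can also be extracted from \cite{KSWZ}). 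It therefore suffices to prove that one leaf band-twist at $e_j$ leaves $\mu_{\gamma_{F_c}}(I)$ unchanged for $|I|\le 2k+1$.

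Next I would compute the effect of such a modification on the peripheral data. Working in $G=\pi_1([0,1]^3\setminus\gamma_{F_c})$, a leaf band-twist at $e_j$ fixes every meridian $\alpha_i$ and every longitude $\lambda_i$ with $i\ne j$, and replaces $\lambda_j$ by $\lambda_j\,\delta_j$, where $\delta_j$ is a conjugate of a commutator $[\xi,\eta]$ associated to the $j$-th leaf. Here $\xi$ is a conjugate of the longitude $\ell_j$ of $e_j$, written in the meridians $\alpha_1,\dots,\alpha_n$ via the correspondence of meridians built into the construction of $\gamma_{F_c}$, while $\eta$ measures the discrepancy between the two parallel push-offs of $e_j$ that enter the formation of $\gamma_{F_c}$; consequently $\eta$ is a product of commutators of (conjugates of) $\ell_j$ with the meridian $\alpha_j$, i.e. $\eta$ sits one lower-central-series step below $\ell_j$. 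Establishing this precise form is the geometric core of the argument.

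Now I would bring in the hypothesis. First, $\omu_{l_c}(J)=0$ for all $|J|\le k$ already forces $\mu_{l_c}(J)=0$ for all $|J|\le k$: inducting on $|J|$, the case $|J|=1$ is the definition, and for $1<|J|\le k$ every proper cyclically-permuted subsequence has length $<|J|$, hence vanishing Milnor number by the inductive hypothesis, so $\Delta_{l_c}(J)=0$ and $\mu_{l_c}(J)=\omu_{l_c}(J)=0$; in particular $\mu_{l_c}(J)$ is a genuine invariant of $l_c$ in this range. Vanishing of $\mu_{l_c}(J)$ for $|J|\le k$ says precisely that each leaf longitude $\ell_j$ lies in the $k$-th lower central subgroup of $\pi_1(S^3\setminus l_c)$, equivalently that its Magnus expansion is $\equiv 1$ modulo terms of degree $\ge k$. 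Carrying this statement across the meridian correspondence gives $\xi\in G_k$, and therefore $\eta\in[G_k,G_1]\subseteq G_{k+1}$.

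Finally, $\delta_j$ is a conjugate of $[\xi,\eta]\in[G_k,G_{k+1}]\subseteq G_{2k+1}$, and since $G_{2k+1}\trianglelefteq G$ the elements $\lambda_j$ and $\lambda_j\,\delta_j$ have the same image in $G/G_{2k+1}$. For $|I|\le 2k+1$ the number $\mu_{\gamma_{F_c}}(I)$ is the coefficient of a monomial of degree $\le 2k$ in the Magnus expansion of $\lambda_j$, so it is determined by $\lambda_j$ modulo $G_{2k+1}$; hence it is unaffected by the leaf band-twist, which proves the theorem. The step I expect to be the real obstacle is the second one: verifying that a leaf band-twist changes $\lambda_j$ by exactly such a commutator, and in particular that the factor $\eta$ lies one lower-central level below $\ell_j$. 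This is precisely what upgrades the naive estimate $|I|\le k+1$ --- all that a single clasp between the two push-offs would give --- to the sharp bound $|I|\le 2k+1=k+(k+1)$. A subsidiary point requiring care is the structural claim of the first paragraph, together with a check that the non-flat vertex of $c$ and the freedom in the disk $D$ contribute no further moves.
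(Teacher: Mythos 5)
Your algebraic core (paragraphs 2--4) is sound and reproduces the paper's degree count: the change in $\lambda_j$ lands in $[G_k,G_{k+1}]\subseteq G_{2k+1}$, which is exactly the mechanism by which the paper's Lemma \ref{lemma2} upgrades the hypothesis at length $k$ to a conclusion at length $2k+1$. The genuine gap is the structural reduction in your first paragraph. It is not true that, after fixing the standard position of Figure \ref{diskbandnotorikata}, the only remaining moduli of a disk/band surface are the framings of the edges. The root is a degree-$n$ vertex which is \emph{not} assumed flat, and clover links are considered up to ambient isotopy, not flat isotopy; an ambient isotopy can braid the stems around one another near the root (and the root disk can be rotated), so that after re-standardizing $D$ the induced change on the bottom tangle $\gamma_{F_c}$ is the insertion, below $\gamma_{F_c}$, of the anti-parallel double of an \emph{arbitrary} pure braid --- the move of Figure \ref{PB-move}(b). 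For $n\geq 2$ this is not a product of single-band full twists (e.g.\ the full twist on two doubled strands is not generated by framing changes). Establishing that these are the only extra moves is the content of the paper's Lemma \ref{lemma1} (ambient isotopy of clover links equals flat isotopy plus a single B-move, proved via the Reidemeister moves for spatial graphs, where move (iv) at the vertex is what produces the braid) and Proposition \ref{propPB-move}; it is also the whole point of the paper, which extends Levine's theorem from flat-vertex clover links under flat isotopy --- the case your argument actually covers --- to arbitrary clover links. The point you label ``subsidiary'' is therefore the central difficulty, and as stated your reduction is false.

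The repair is to handle the general move directly: insert below $\gamma$ the oppositely-oriented double $u'$ of an arbitrary string link $u$ (the paper's SL-move) and show this preserves $\mu(I)$ for $|I|\leq 2k+1$. The saving grace is the same commutator mechanism you identified for band twists: because each pair $u_{i1},u_{i2}$ is anti-parallel and cobounds a band, the element $u_i$ is a product of conjugates of $\beta_i=[\lambda_i',\alpha_i']$, hence has Magnus expansion $1+\mathcal{O}(k+1)$ under the vanishing hypothesis; then $\alpha_i=u_i^{-1}\alpha_i'u_i$ agrees with $\alpha_i'$ up to degree $k+2$, and $\lambda_i'=u_i\lambda_iu_i^{-1}$ agrees with $\lambda_i$ up to degree $2k+1$. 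This is Lemma \ref{lemma2} of the paper; your band-twist computation is its special case where $u$ is trivial. Your remaining steps (deducing $\mu_{l_c}(J)=0$ from $\omu_{l_c}(J)=0$ by induction on $|J|$, and reading off $\mu(I)$ from $\lambda_j$ modulo $G_{2k+1}$) are correct and match the paper.
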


\begin{remark}
Levine \cite{L} proved the same result as Theorem \ref{well-definedness} for {\em flat vertex} clover links under {\em flatly isotopy}.
See Subsection \ref{sectionwell-definedness} for the definition of a flatly isotopy.
\end{remark}
\subsection{Proof of Theorem \ref{well-definedness}}
\label{sectionwell-definedness}
In order to prove Theorem \ref{well-definedness}, we need two Lemmas \ref{lemma1} and \ref{lemma2}.

Two flat vertex graphs $\Gamma$ and $\Gamma'$ are {\it flatly isotopic} if there exists an isotopy $h_t:S^3 \rightarrow S^3 (t\in[0,1])$ such that $h_0=id$, $h_1(\Gamma)=\Gamma'$ and $h_t(\Gamma)$ is a flat vertex graph for each $t\in[0,1]$.
We call such a isotopy $h_{t}$ a {\it flatly isotopy} \cite{Yamada}.

An $n$-component {\it braid} $\beta=\beta_{1}\cup\beta_{2}\cup\cdots\cup\beta_{n}$ is a tangle in $[0,1]^{3}$ such that for each $t\ (0\leq t\leq 1)$, $\displaystyle\bigcup_{i=1}^{n}\beta_{i}$ intersects $[0,1]\times[0,1]\times \{t\}$ transversely at $n$ points.
In particular, $\beta$ is a {\it pure braid} if for any $i\ (=1,2,\ldots,n)$, the boundary $\partial \beta_{i}=\{ (x,\frac{1}{2},0),(x,\frac{1}{2},1) \} \subset \partial [0,1]^{3}$ for $x\in [0,1]$.

\begin{lemma}
\label{lemma1}
If two $n$-clover links $c$ and $c'$ are ambient isotopic to each other, 
then there exists an $n$-clover link $c''$ such that  $c''$ is obtained from $c$ by a single B-move and $c''$ is flatly isotopic to $c'$,
where a B-move is a local move as illustrated in Figure~\ref{B-move}.
\end{lemma}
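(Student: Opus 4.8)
The plan is to start from a given ambient isotopy carrying $c$ to $c'$ and to convert it into a flatly isotopy at the cost of applying one B-move to $c$, the B-move being exactly what absorbs the part of the isotopy responsible for the failure of flatness --- which I will show is concentrated near the root. Throughout I take a B-move to be the local move at the root that rotates (and, if needed, flips) the vertex so as to prescribe the cyclic order of the stems and the framing, as in Figure~\ref{B-move}; such a move is an ambient isotopy but not, in general, a flatly isotopy.

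\textbf{Step 1: reduction to a local problem at the root.} An ambient isotopy carrying $c$ to $c'$ induces an isomorphism of the underlying graph $C_n$, hence $l_c\cong l_{c'}$. Applying to $c$ an ambient isotopy of $S^3$ that extends an isotopy $l_c\to l_{c'}$ and, if necessary, slides any knotting of the stems away from the root, we may assume that $c$ and $c'$ share a common leaf link $l$ and that each of $c,c'$ has a flat vertex at its root. Now apply the permitted B-move to $c$ so that the flat vertex of $c$ acquires the cyclic order of stems and the framing that $c'$ has at its root; a single local rotation/flip does this, and since it is an ambient isotopy the result $c''$ is still a flat-vertex clover link ambient isotopic to $c'$. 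A further flatly isotopy of $c''$, supported near the root, makes $c''$ and $c'$ coincide inside a small ball $V$ about a common root. It therefore suffices to prove: two flat-vertex clover links that share the leaf link $l$, agree inside a ball $V$ about a common root, and are ambient isotopic, are flatly isotopic.

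\textbf{Step 2: localising the obstruction.} Fix an ambient isotopy $h_t$ ($t\in[0,1]$, $h_0=\mathrm{id}$) between the two flat-vertex clover links of Step 1, and shrink $V$ so that $h_t(V)$ remains a small ball meeting the moving clover link only in the $n$ arcs issuing from the moving root. Away from $h_t(V)$ the moving clover link has no root, and $h_t$ may be taken there to keep the remaining (degree-three) vertices flat, i.e.\ to be a flatly isotopy. Hence the entire obstruction to $h_t$ being a flatly isotopy is concentrated in $h_t(V)$: the germ of the moving clover link at the root traces a path, with flat endpoints, through embeddings of a cone on $n$ points, and its defect is recorded by the cyclic order of the stems together with an integer framing at the flat vertex. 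By the Reidemeister-type calculus for spatial graphs with flat vertices (cf.\ \cite{KSWZ}, \cite{Yamada}), $h_t$ is then a finite composition of flatly isotopies and elementary moves supported near the root, each of the latter being a B-move, or its inverse, up to a flatly isotopy; and since all of these local moves occur in one ball, their product is realised there.

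\textbf{The main obstacle} is the economy clause ``a single B-move'': one must collapse the composition produced in Step 2 into exactly one B-move. I would establish (i) a B-move commutes with a flatly isotopy supported off $V$; (ii) a flatly isotopy can be pushed out of $V$ at the cost of altering the B-move inside $V$; and (iii) two B-moves supported in the same ball $V$ compose --- like a product of tangles in a cube --- to a single B-move. Granting (i)--(iii) together with Step 2, the finite sequence of moves telescopes to one B-move followed by a flatly isotopy, which is the lemma. Verifying (ii) and (iii) is where the precise shape of the B-move in Figure~\ref{B-move} must be used; the rest of the argument is formal.
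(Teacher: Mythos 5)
Your overall architecture (localise the failure of flatness at the root, trade it for B-moves, then commute and compose them into one) is the right one, and it parallels the paper's proof, which works diagrammatically: Kauffman's Reidemeister moves (i)--(v) for spatial graphs, the observation that only move (iv) fails to be a flatly isotopy, the replacement of each (iv)$^{+}$ by a $\widetilde{B}$-move modulo (i)--(iii) and (v), the commutation of $\widetilde{B}$-moves past the flat moves, and the fact that a product of braids is a braid. However, your proposal contains a genuine error at its core: you define the B-move as a ``rotation (and, if needed, flip) of the vertex'' prescribing the cyclic order of the stems and an integer framing. The B-move of Figure~\ref{B-move} is the insertion of an \emph{arbitrary} $n$-strand braid into the stems just above the root disk. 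This is not a cosmetic difference. The braid defect accumulated by an ambient isotopy is a product of conjugates of elementary crossings of adjacent stems at the root --- conjugated by whatever global (flat) isotopy occurs between successive non-flat events --- and is therefore an arbitrary element of the braid group, not a cyclic permutation plus a power of the full twist. For instance, inserting $\sigma_1^{2}$ (a clasp of the first two stems) near the root preserves both the cyclic order and your framing, yet the resulting clover link is in general \emph{not} flatly isotopic to the original; detecting exactly such braids is the point of the Milnor numbers defined later. With your weakened B-move, Step~2 would conclude that two such clover links are flatly isotopic, and the lemma as you prove it would be false.

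Beyond this, the two places where you defer the argument are precisely where the paper does its work. The assertion in Step~2 that $h_t$ decomposes into flatly isotopies and elementary moves supported near the root, ``by the Reidemeister-type calculus,'' is not a citation one can make off the shelf: the paper derives it by passing to diagrams, splitting move (iv) into (iv)$^{\pm}$, and showing via Figures~\ref{applying-move(iv)} and \ref{about(b)} that each instance is a $\widetilde{B}$-move up to flat moves. Your unproved claims (i)--(iii) --- commuting a B-move past flat isotopies and composing B-moves in a single ball --- are the content of Figure~\ref{commutative} and the closing lines of the paper's proof; (ii) in particular requires checking that pushing a move (v) past a $\widetilde{B}$-move only costs further moves (ii) and (iii), which is where the precise form of the move enters. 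As written, the proposal identifies the skeleton of a correct proof but both misstates the move being counted and leaves unproved the steps that make ``a single B-move'' true.
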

\begin{figure}[htbp]
  \begin{center}
    \begin{overpic}[width=10cm]{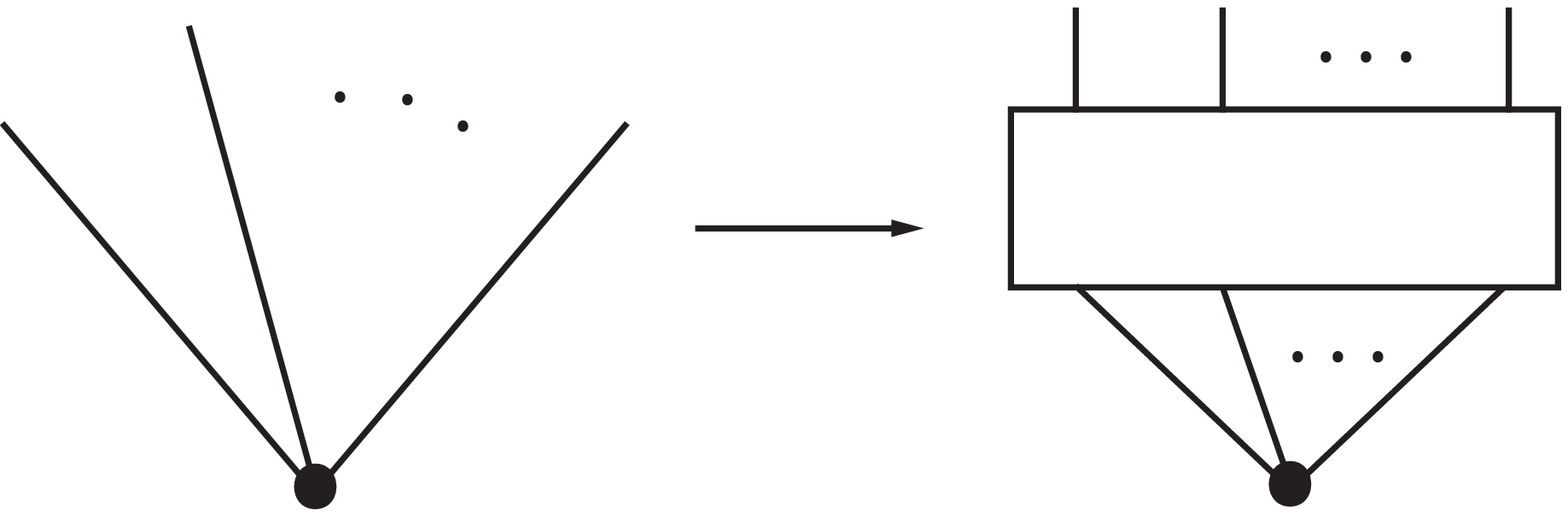}
    \linethickness{3pt}
    \put(130,77){B-move}
    \put(223,75){braid}
    \end{overpic}
    \caption{A B-move}
    \label{B-move}
  \end{center}
\end{figure}

\begin{proof}
Consider regular diagrams $\widetilde{c}, \widetilde{c'}$ of $c, c'$ respectively.
Since $c$ and $c'$ are ambient isotopic to each other, $\widetilde{c}$ and $\widetilde{c'}$ are related by a finite sequence of Reidemeister moves (i) $\sim$ (v) as illustrated in Figure~\ref{Reidemeister-moves} \cite{K}.
\begin{figure}[htbp]
  \begin{center}
    \begin{overpic}[width=8.5cm]{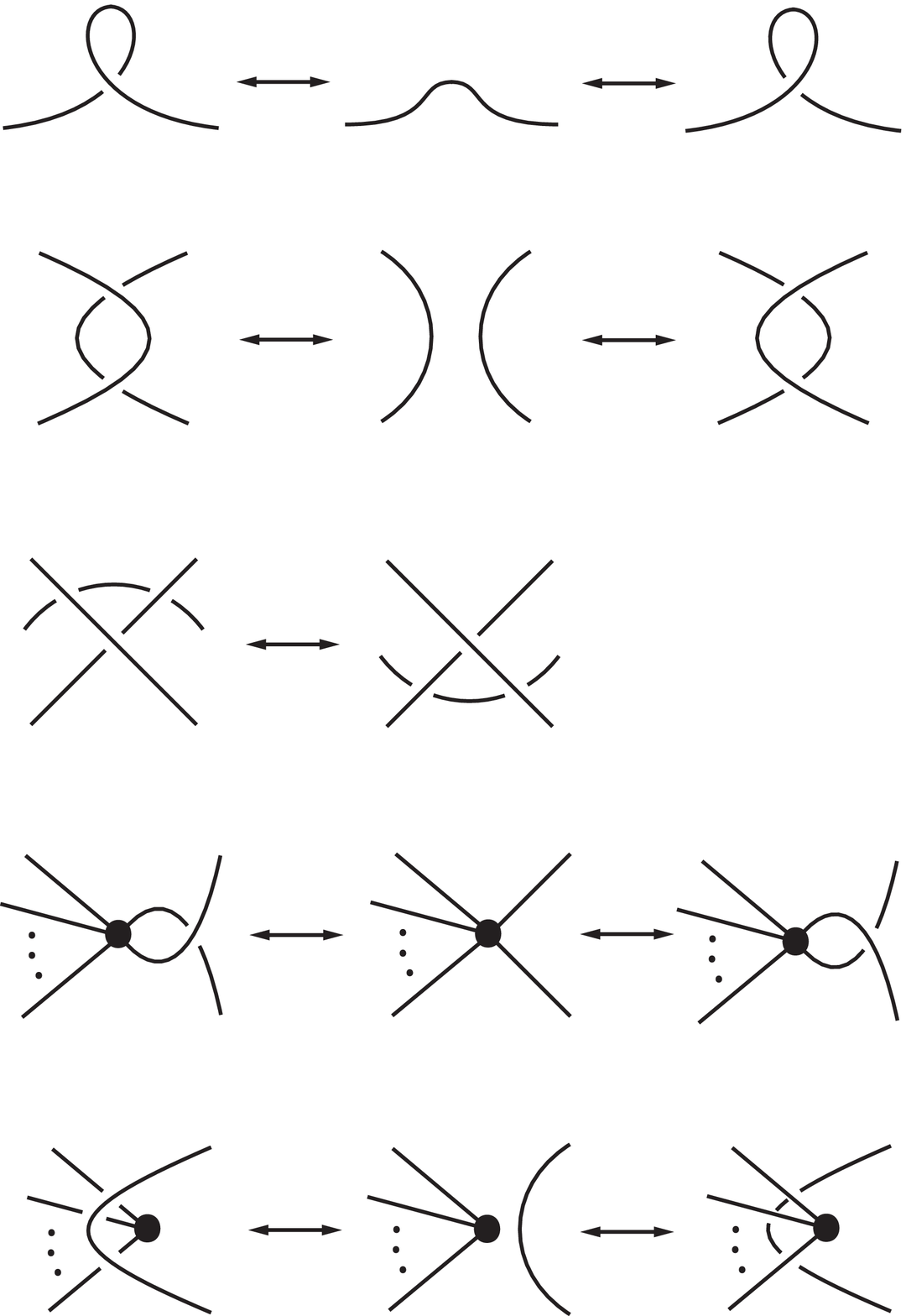}
    \put(-20,330){(i)}
    \put(-20,260){(ii)}
    \put(-20,178){(iii)}
    \put(-20,100){(iv)}
    \put(-20,22){(v)}
    \end{overpic}
    \caption{Reidemeister moves (i)$\sim$(v)}
    \label{Reidemeister-moves}
  \end{center}
\end{figure}

Moves excepting move (iv) are realized by flatly isotopies.
Move (iv) consists of two kinds transformations.
One increases a crossing, and the other decreases a crossing.
We call those two kinds of transformations move (iv)$^{+}$ and move (iv)$^{-}$ respectively.
It is not hard to see that a move (iv)$^{-}$ is realized by moves (iv)$^{+}$ and (ii).
Hence moves (i) $\sim$ (v) are realized by moves (i) $\sim$ (iii), (v) and (iv)$^{+}$.

We consider a move (iv)$^{+}$.
If we apply a move (iv)$^{+}$ to a regular diagram $\widetilde{d}$ 
of an $n$-clover link, then we obtain a regular diagram either (a) or (b) as illustrated in Figure~\ref{applying-move(iv)}.
The diagram (a) is clearly obtained from $\widetilde{d}$ by a single $\widetilde{B}$-move,
where $\widetilde{B}$-move is a local move of regular diagrams which correspond to B-move.
The diagram (b) is transformed into a diagram (b$'$) as illustrated in Figure~\ref{about(b)} 
by a finite sequence of moves (i)$\sim$(iii) and (v).
So the diagram (b) is obtained from $\widetilde{d}$ by moves (i)$\sim$(iii), (v) and a 
$\widetilde{B}$-moves.
Thus a move (iv)$^{+}$ is realized by moves (i)$\sim$(iii), (v) and a $\widetilde{B}$-move.
It follows that moves (i) $\sim$ (v) are realized by moves (i) $\sim$ (iii), (v) and $\widetilde{B}$-moves.
\begin{figure}[htbp]
  \begin{center}
    \begin{overpic}[width=10cm]{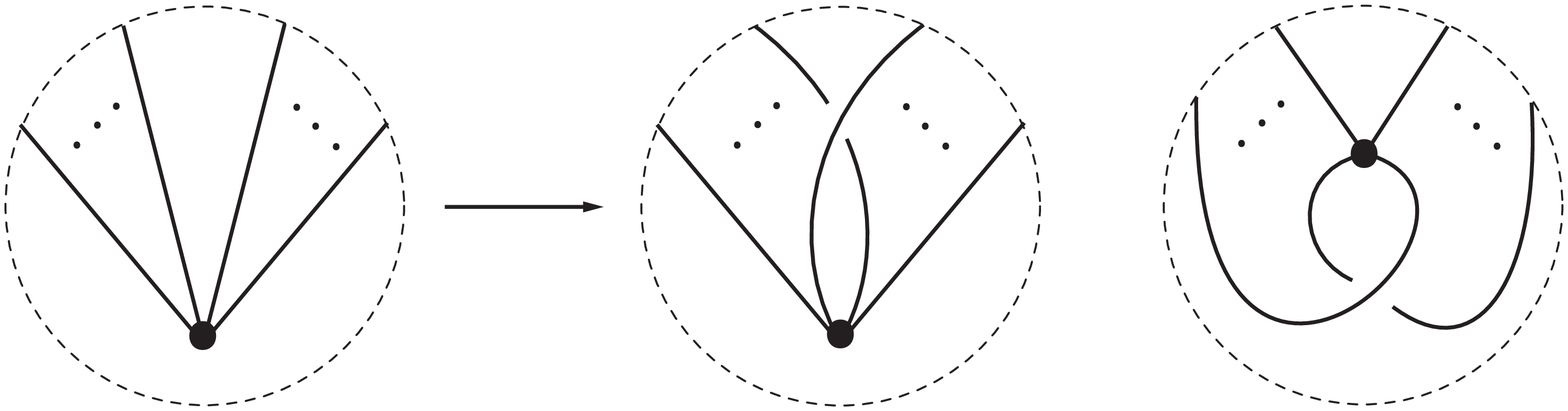}
    \put(83,44){(iv)$^{+}$}
    \put(200,37){or}
    \put(28,-12){$\widetilde{d}$}
    \put(147,-10){(a)}
    \put(246,-10){(b)}
    \put(-12,57){$f_{1}$}
    \put(10,77){$f_{i}$}
    \put(52,77){$f_{i+1}$}
    \put(69,57){$f_{n}$}
    \put(108,57){$f_{1}$}
    \put(125,77){$f_{i+1}$}
    \put(169,77){$f_{i}$}
    \put(190,57){$f_{n}$}
    \put(208,62){$f_{1}$}
    \put(229,77){$f_{i}$}
    \put(265,77){$f_{i+1}$}
    \put(283,62){$f_{n}$}
    \end{overpic}
    \caption{Regular diagrams obtained by applying a move (iv)$^{+}$ to a regular diagram $\widetilde{d}$}
    \label{applying-move(iv)}
  \end{center}
\end{figure}
\begin{figure}[htbp]
  \begin{center}
    \begin{overpic}[width=7.5cm]{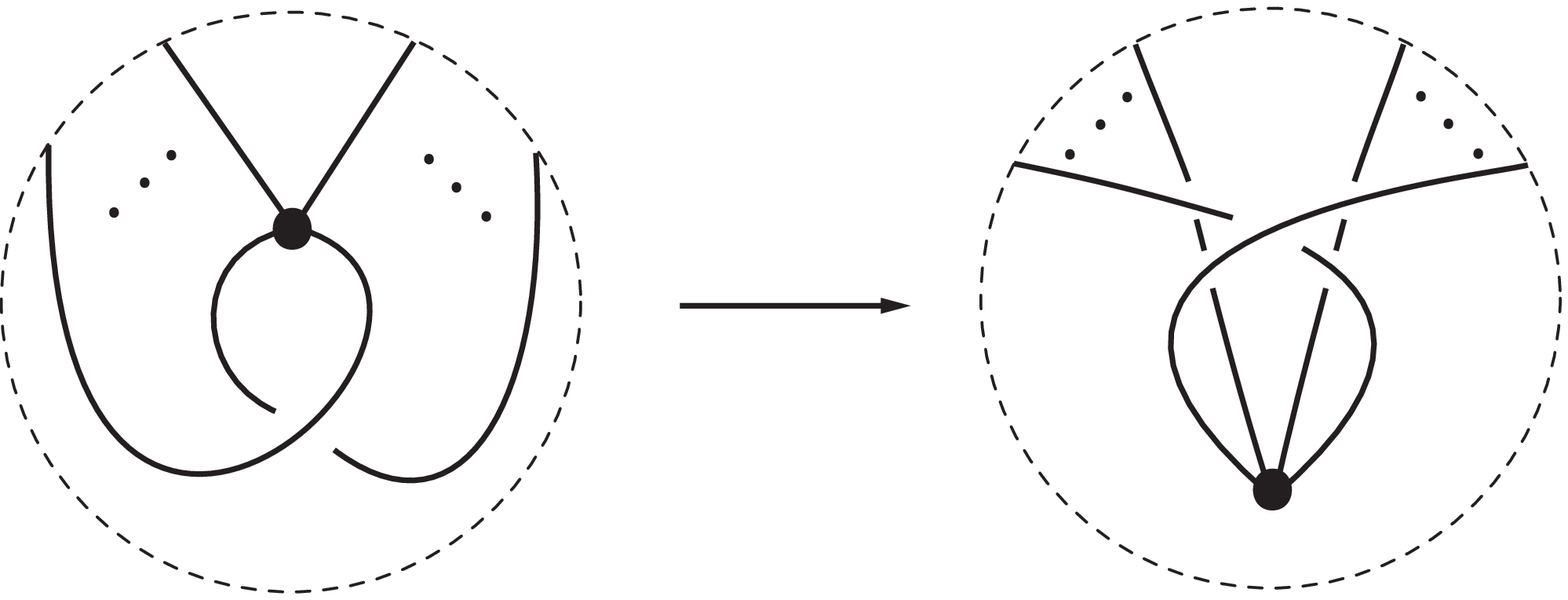}
    \put(83,47){(i)'s$\sim$(iii)'s}
    \put(90,32){and (v)'s}
    \put(36,-10){(b)}
    \put(166,-10){(b$'$)}
    \put(-3,68){$f_{1}$}
    \put(19,84){$f_{i}$}
    \put(52,84){$f_{i+1}$}
    \put(73,68){$f_{n}$}
    \put(126,62){$f_{1}$}
    \put(151,84){$f_{i}$}
    \put(187,84){$f_{i+1}$}
    \put(212,62){$f_{n}$}
    \end{overpic}
    \caption{}
    \label{about(b)}
  \end{center}
\end{figure}

Applying a move (v), subsequently a $\widetilde{B}$-move can be realized by applying a $\widetilde{B}$-move, subsequently a move (v) and several moves (ii) and (iii), see Figure~\ref{commutative}.
It is clear that applying each move ($*$)~($*=$ i, ii, iii), subsequently a $\widetilde{B}$-move can be realized by applying $\widetilde{B}$-moves, subsequently a move ($*$).
\begin{figure}[htbp]
  \begin{center}
    \begin{overpic}[width=10cm]{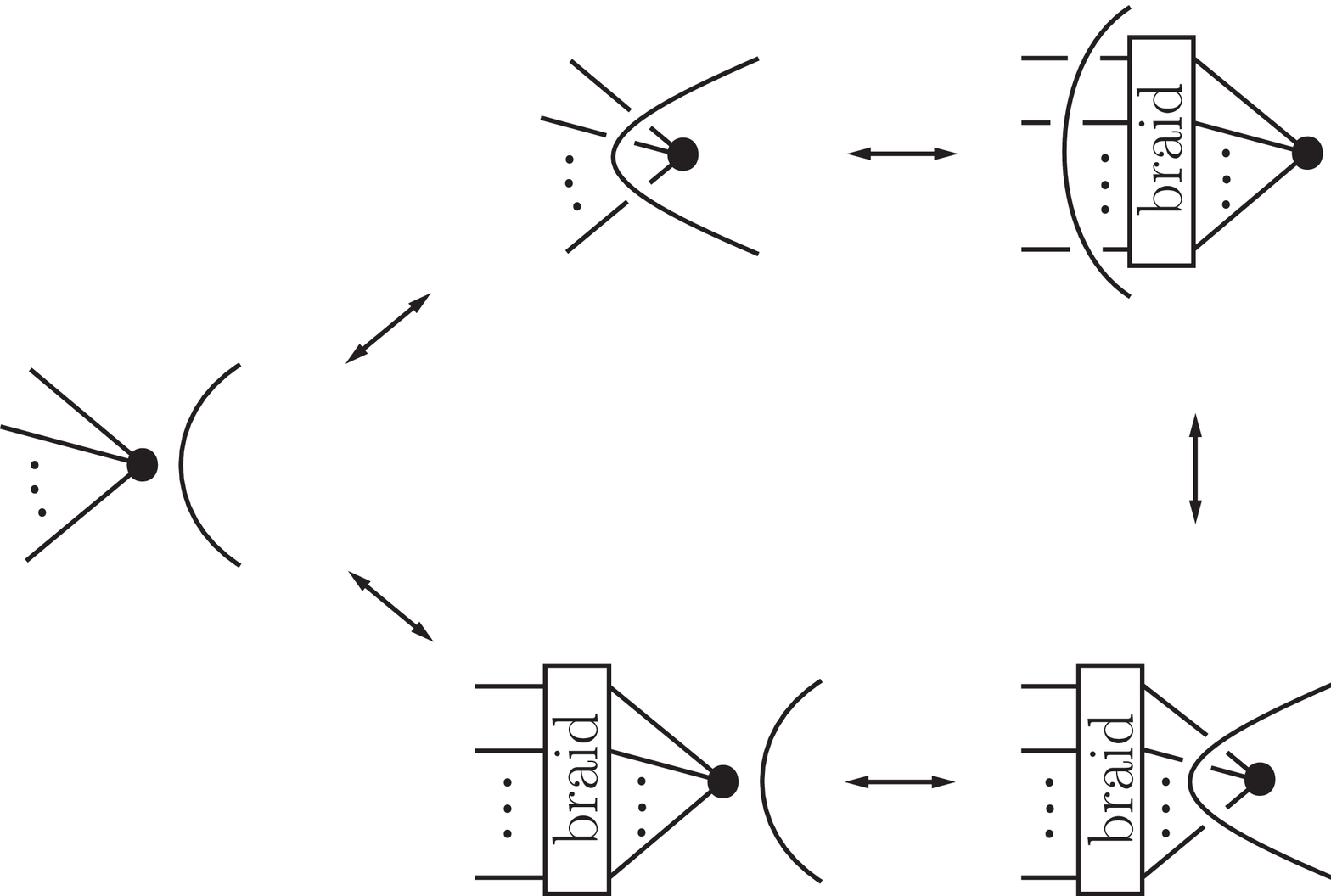}
    \put(47,55){$\widetilde{B}$-move}
    \put(72,127){(v)}
    \put(175,165){$\widetilde{B}$-move}
    \put(185,30){(v)}
    \put(260,88){(ii)'s and (iii)'s}
    \end{overpic}
    \caption{}
    \label{commutative}
  \end{center}
\end{figure}
So applying several moves (i)$\sim$(iii) and (v), subsequently a $\widetilde{B}$-move
can be realized by applying a $\widetilde{B}$-move, subsequently several moves (i)$\sim$(iii) and (v).

Therefore there exists a regular diagram $\widetilde{c''}$ of an $n$-clover link $c''$ such that 
$\widetilde{c''}$ is obtained from $\widetilde{c}$ by a finite sequence of $\widetilde{B}$-moves and $\widetilde{c'}$ is obtained from $\widetilde{c''}$ by a finite sequence of moves (i)$\sim$(iii) and (v).
We note that a finite sequence of $\widetilde{B}$-moves is realized by a single $\widetilde{B}$-move.
This completes the proof.
\end{proof}

Since we suppose that the disk parts of any disk/band surfaces for a clover link  
are as illustrated in Figure~\ref{diskbandnotorikata}, by Lemma \ref{lemma1}, we immediately have the following Proposition~\ref{propPB-move}.
\begin{proposition}
\label{propPB-move}
For an $n$-clover link $c$, any two disk/band surfaces $F_{c}$ and $F'_{c}$ are transformed into each other by adding full-twists to bands {\rm (}Figure~{\rm \ref{PB-move} (a))} and a single move illustrated in 
Figure~{\rm \ref{PB-move} (b)}.
\end{proposition}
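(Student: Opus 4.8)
The plan is to translate Lemma~\ref{lemma1} from clover links to disk/band surfaces. First I would normalize the two surfaces: by the remark following the definition of a disk/band surface, each of $F_c$ and $F'_c$ is ambient isotopic to a model surface obtained by attaching a disk to every vertex of $C_n$ and a band along every edge, with the disk at the root normalized as in Figure~\ref{diskbandnotorikata}. Up to ambient isotopy, such a model surface carries exactly two kinds of extra data beyond $c$: the number of full twists on each band, and, at every vertex disk, the way the incident bands run from the disk out along the edges -- that is, a flat vertex structure. Hence, up to ambient isotopy and the full-twist moves of Figure~\ref{PB-move}(a), a model surface of $c$ is nothing but a regular neighborhood of a flat vertex clover link that is ambient isotopic to $c$, and conversely any such regular neighborhood is a model surface of $c$. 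Applying Figure~\ref{PB-move}(a) moves to cancel all the twists, I may therefore assume that $F_c$ and $F'_c$ are the regular neighborhoods $N(d)$ and $N(d')$ of flat vertex clover links $d$ and $d'$, each ambient isotopic to $c$.

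Since $d$ and $d'$ are both ambient isotopic to $c$, they are ambient isotopic to each other, so Lemma~\ref{lemma1} yields a flat vertex clover link $d''$ that is obtained from $d$ by a single B-move and is flatly isotopic to $d'$. The key point is a dictionary between the moves: a B-move near the root of a flat vertex clover link corresponds, after taking regular neighborhoods, exactly to the surface move of Figure~\ref{PB-move}(b), and a flat isotopy of flat vertex clover links thickens to an ambient isotopy of their regular neighborhoods. Consequently $N(d'')$ is obtained from $N(d)$ by a single Figure~\ref{PB-move}(b) move and is ambient isotopic to $N(d')$, i.e.\ to the twist-free model of $F'_c$.

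Assembling these steps produces
\[
F_c \ \longrightarrow\ N(d) \ \longrightarrow\ N(d'') \ \sim\ N(d') \ \longrightarrow\ F'_c ,
\]
in which the first and last arrows are sequences of Figure~\ref{PB-move}(a) moves, the middle arrow is a single Figure~\ref{PB-move}(b) move, and $\sim$ denotes ambient isotopy. Because adding a full twist to a band commutes, up to ambient isotopy, with the move of Figure~\ref{PB-move}(b), all the Figure~\ref{PB-move}(a) moves may be collected, and we conclude that $F_c$ and $F'_c$ are transformed into each other by adding full twists to bands and a single move of the type in Figure~\ref{PB-move}(b).

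The step I expect to be the main obstacle is making the move dictionary of the second paragraph precise: one must verify that every disk/band surface of $c$ is, up to ambient isotopy and full-twist moves, a regular neighborhood of a flat vertex clover link ambient isotopic to $c$ (the ``flattening'' step), and that the B-move of Figure~\ref{B-move} corresponds under this to the surface move of Figure~\ref{PB-move}(b). A secondary point, needed to get \emph{one} move rather than finitely many, is the observation at the end of the proof of Lemma~\ref{lemma1} that a finite sequence of B-moves is realized by a single B-move; the commutation of twist moves past the Figure~\ref{PB-move}(b) move, used to collect the Figure~\ref{PB-move}(a) moves, is routine.
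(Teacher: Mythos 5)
Your proof is correct and follows essentially the same route as the paper: the paper offers no detailed argument, stating only that the proposition follows immediately from Lemma~\ref{lemma1} together with the normalization of the root disk in Figure~\ref{diskbandnotorikata}, and your write-up is a careful expansion of exactly that reduction (full twists absorb the band framings, the B-move translates to the move of Figure~\ref{PB-move}(b), and flat isotopy thickens to ambient isotopy of surfaces).
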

\begin{figure}[htpb]
 \begin{center}
  \begin{overpic}[width=10cm]{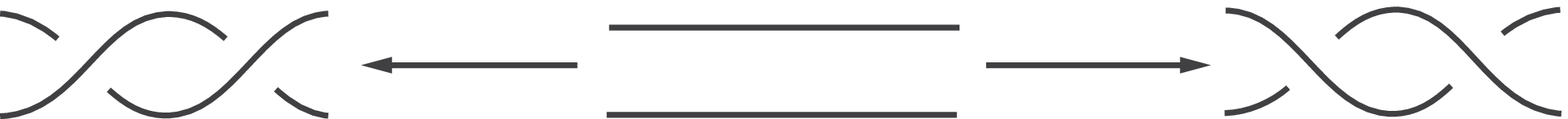}
     \put(-20,8){(a)}
     \put(130,6){band}
     \put(67,15){full-twist}
     \put(180,15){full-twist}
  \end{overpic}
 \end{center}
\end{figure}

\begin{figure}[htbp]
  \begin{center}
    \begin{overpic}[width=10cm]{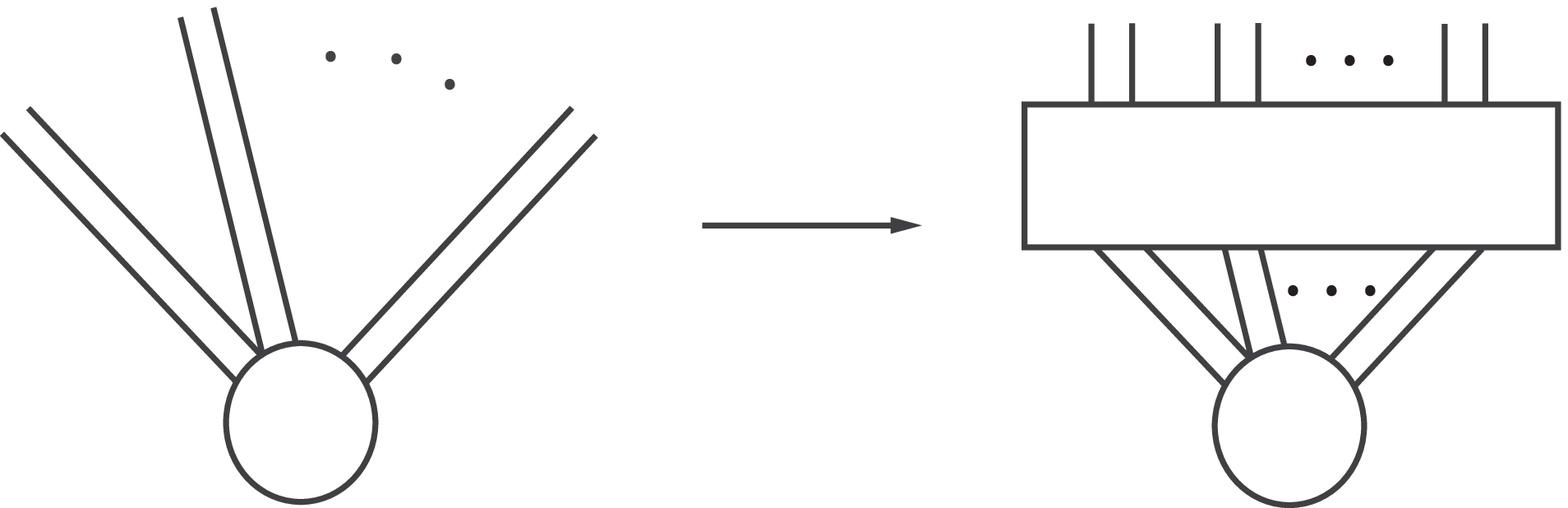}
    \put(-20,46){(b)}
    \put(213,58){pure braid}
    \end{overpic}
    \caption{Two local moves of disk-band surfaces}
    \label{PB-move}
  \end{center}
\end{figure}

\begin{figure}[htbp]
    \begin{overpic}[width=6cm]{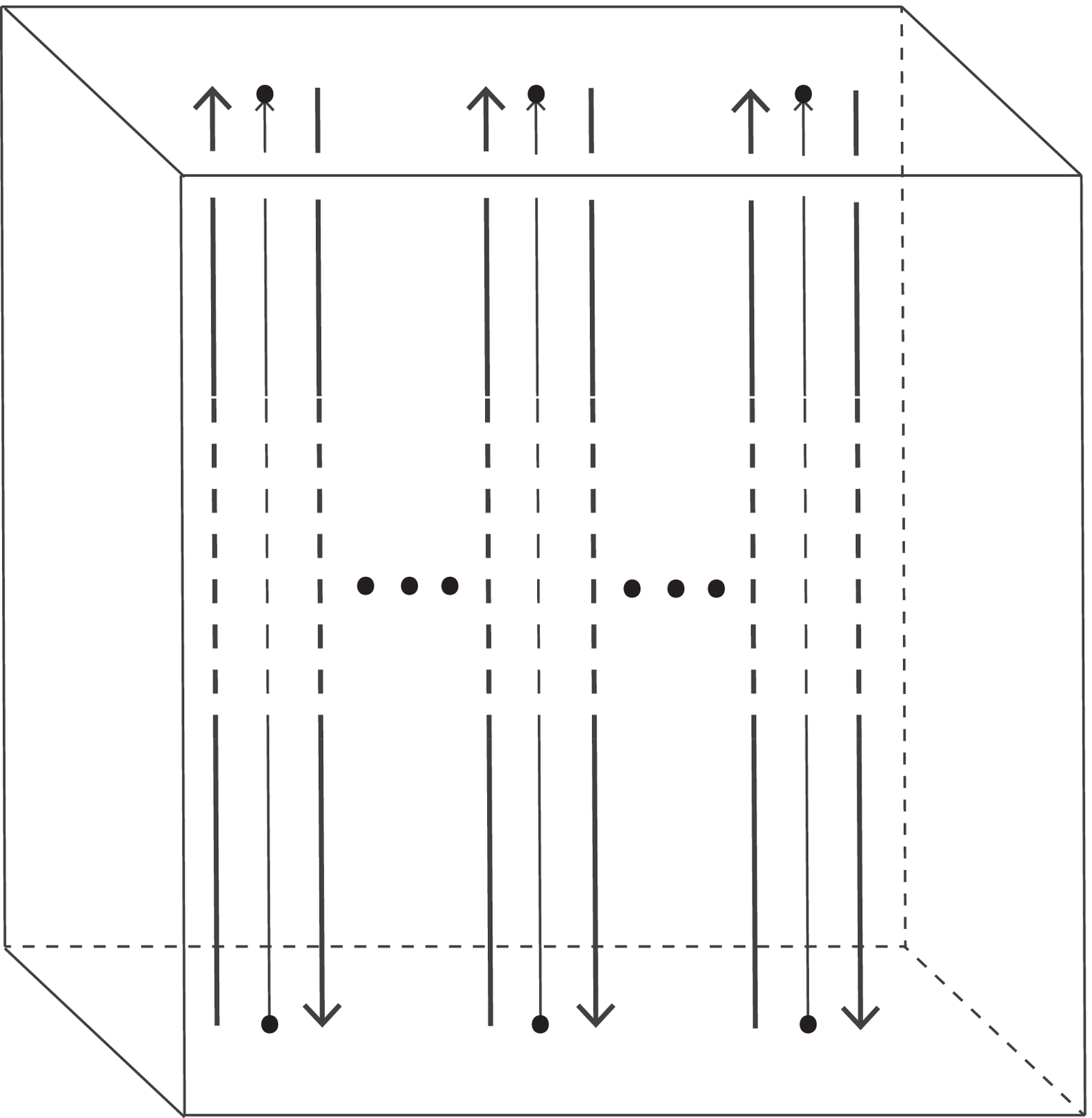}
    \put(18,167){$u_{11}$}
    \put(33,167){$u_{1}$}
    \put(44,167){$u_{12}$}
    \put(63,167){$u_{i1}$}
    \put(77,167){$u_{i}$}
    \put(87,167){$u_{i2}$}
    \put(103,167){$u_{n1}$}
    \put(119,167){$u_{n}$}
    \put(131,167){$u_{n2}$}
    \end{overpic}
    \caption{}
    \label{parallelstrands}
\end{figure}

Here we introduce a SL-move which is a transformation of an $n$-component bottom tangle $\gamma=\gamma_{1}\cup\gamma_{2}\cup\cdots\cup\gamma_{n}$ in $[0,1]^{3}$
\begin{itemize}
\item[(1)]
Let $u=u_{1}\cup u_{2}\cup\cdots\cup u_{n}$ be an oriented ordered $n$-component string link in $[0,1]^{3}$.
For each $i\ (=1,2,\ldots,n)$, we consider two arcs $u_{i1}$ and $u_{i2}$ which are parallel to the $i$th component $u_{i}$ of $u$ with orientations as illustrated in Figure~\ref{parallelstrands}.
Let $u'=(u_{11}\cup u_{12})\cup \cdots \cup (u_{n1}\cup u_{n2})$.
We may assume that for each $i~(=1,2,\ldots,n)$, $\partial u_{i1}=\{ (\frac{2i-1}{2n+1},\frac{1}{2},0), (\frac{2i-1}{2n+1},\frac{1}{2},1)\}$ and $\partial u_{i2}=\{ (\frac{2i}{2n+1},\frac{1}{2},0), (\frac{2i}{2n+1},\frac{1}{2},1) \}$.

\item[(2)] Let $\gamma'=\gamma'_{1}\cup\gamma'_{2}\cup\cdots\cup\gamma'_{n}$ be an $n$-component bottom tangle in $[0,1]^{3}$ defined by 
\begin{center}
$\gamma'_{i}=h_{0}(u_{i1}\cup u_{i2})\cup h_{1}(\gamma_{i})$
\end{center}
for $i=1,2,\ldots,n$, where $h_{0},h_{1}:([0,1]\times[0,1])\times[0,1]\rightarrow ([0,1]\times[0,1])\times[0,1]$ are embeddings defined by 
\begin{center}
$h_{0}(x,t)=(x,\frac{1}{2}t)$ and $h_{1}(x,t)=(x,\frac{1}{2}+\frac{1}{2}t)$
\end{center}
for $x\in([0,1]\times[0,1])$ and $t\in[0,1]$.
\end{itemize}
We say that {\it $\gamma'$ is obtained from $\gamma$ by a SL-move}.
We note that if $u$ is trivial, a SL-move is just adding full-twists or nothing.
A SL-move is determined by a {\em String Link} and a number of full-twists; this explains \lq SL\rq  in SL-move.
For example, see Figure~\ref{SL-move}

\begin{figure}[htbp]
  \begin{center}
    \begin{overpic}[width=8cm]{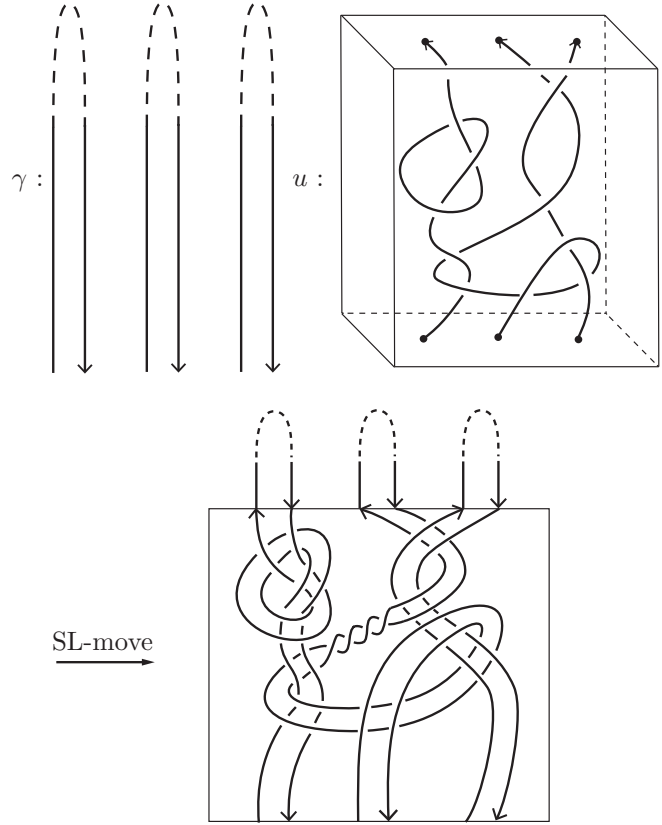}
    \put(-15,240){$\gamma$ :}
    \put(90,240){$u$ :}
    \put(0,65){SL-move}
    \end{overpic}
    \caption{An example of a SL-move}
    \label{SL-move}
  \end{center}
\end{figure}

The following lemma, which is not directly mentioned, can be proved by combining some results in \cite{L}.
In the following we directly prove by calculus of Milnor numbers.
\begin{lemma}\cite{L}
\label{lemma2}
Let $\gamma$ be an $n$-component bottom tangle and $\gamma'$ a bottom tangle obtained from $\gamma$ by a SL-move.
If $\mu_{\gamma}(J)=\mu_{\gamma'}(J)=0$ for any sequence $J$ with $|J|\leq k$, then $\mu_{\gamma}(I)=\mu_{\gamma'}(I)$ for any sequence $I$ with $|I|\leq 2k+1$.
\end{lemma}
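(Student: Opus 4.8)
The plan is to compute the $j$th longitude of $\gamma'$ explicitly in terms of the $j$th longitude $\lambda_{j}$ of $\gamma$ and of the longitudes of the inserted string link $u$, and then to read off the Milnor numbers from Magnus expansions. First I would stack-decompose the cube carrying $\gamma'$ along the level $z=\tfrac12$: the lower half contains the doubled string link $\widehat u=\bigcup_{i}(u_{i1}\cup u_{i2})$, the upper half contains the copy $h_{1}(\gamma)$ of $\gamma$, and the two halves are glued along a $2n$-punctured disc. Pushing a parallel of the $j$th strand of $\gamma'$ through this decomposition and applying van Kampen yields a product formula of the shape
\[
\lambda'_{j}=\ell_{j1}\cdot\Phi(\lambda_{j})\cdot\ell_{j2}^{-1},
\]
where $\ell_{j1}$ and $\ell_{j2}$ are the longitudes of the parallel strands $u_{j1}$ and $u_{j2}$ inside $\widehat u$ (the parallel running up $u_{j1}$ contributes $\ell_{j1}$, the one running down $u_{j2}$ contributes $\ell_{j2}^{-1}$), and $\Phi$ is the homomorphism induced by the inclusion of the upper half, carrying the $i$th meridian $\alpha_{i}$ of $\gamma$ to a conjugate of the $i$th meridian $\alpha'_{i}$ of $\gamma'$. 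The $0$-framing normalisation of $\lambda'_{j}$ is automatic here, since $\ell_{j1}$ and $\ell_{j2}$ have the same image in $H_{1}$. The geometric point to exploit is that $u_{j1},u_{j2}$ are $0$-framed parallels traversed in opposite directions: $\ell_{j1}$ and $\ell_{j2}$ become equal once each pair of parallels is identified, so $\ell_{j1}\ell_{j2}^{-1}$ --- and likewise the conjugating elements occurring inside $\Phi$ --- are ``error terms'' separating $\lambda'_{j}$ from a conjugate of $\lambda_{j}$, and they lie in the kernel of that identification.

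Next I would substitute into the Magnus expansion $E$ and track lowest degrees, using that ``$\mu_{\gamma}(J)=0$ for $|J|\le k$'' is equivalent to $\lambda_{j}\in G_{k}$, i.e.\ to $E(\lambda_{j})-1$ having lowest degree $\ge k$. Two observations then reduce the lemma to a single degree estimate. First, since $\Phi$ replaces each variable $X_{i}$ by $X_{i}$ plus terms of degree $\ge 2$, the series $E(\Phi(\lambda_{j}))$ again starts in degree $k$ and agrees with $E(\lambda_{j})$ through degree $k$; once the conjugation in the product formula is accounted for, the $\Phi(\lambda_{j})$ factor contributes exactly the Milnor numbers of $\gamma$ in the relevant range. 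Second, the hypothesis ``$\mu_{\gamma'}(J)=0$ for $|J|\le k$'' says $\lambda'_{j}\in G'_{k}$; fed back into the product formula this pins down the error terms $\ell_{j1}\ell_{j2}^{-1}$ and the relevant combinations of conjugators (which, up to conjugacy, are themselves longitudes of $\gamma'$, hence in $G'_{k}$) to lie deep in the lower central series. What remains is to show that the total error in $E(\lambda'_{j})-E(\lambda_{j})$ has lowest degree $\ge 2k+1$, which is exactly $\mu_{\gamma}(I)=\mu_{\gamma'}(I)$ for every sequence $I$ with $|I|\le 2k+1$.

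This last degree count is the step I expect to be the real obstacle: a naive product of two order-$k$ contributions only yields order $2k$, and squeezing out the sharp bound $2k+1$ uses both hypotheses together --- the error terms are not merely elements of $G'_{k}$ but differences of longitudes of $0$-framed parallels, which sit one degree deeper, and the error coming from $\ell_{j1}\ell_{j2}^{-1}$ cancels, through degree $2k$, against the error introduced by the conjugations inside $\Phi$. Carrying this out requires careful bookkeeping of the base-point paths $t_{i},t'_{i}$, of the partial longitudes recorded along the crossings in the lower half, and of the identification of the variables of $G$ with those of $G'$; granting that, the needed vanishings are forced term by term by the two hypotheses, in the same way as in the longitude computations of Milnor and of Levine, and comparing coefficients completes the proof.
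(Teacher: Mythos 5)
Your setup---decomposing the cube at $z=\tfrac12$, writing $\lambda_j'$ as (a conjugate of) $\lambda_j$ framed by longitudes of the inserted band strands, and running a Magnus-expansion degree count---is the same architecture as the paper's proof, and you correctly isolate the crux: the naive estimate only gives agreement of the expansions through degree $2k-1$, i.e.\ Milnor numbers of length $\leq 2k$, and one must gain exactly one more degree. But that decisive step is precisely what your proposal leaves open. Placing the conjugators only in $G'_k$ (``up to conjugacy, themselves longitudes of $\gamma'$'') is insufficient, since that bound reproduces the $2k$ you are trying to beat; and the mechanism you then gesture at---a cancellation ``through degree $2k$'' between the error from $\ell_{j1}\ell_{j2}^{-1}$ and the error from the conjugations inside $\Phi$---is not what makes the argument work and is not substantiated.

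The paper's resolution is a single structural observation, after which the two error sources are each separately of order $2k+1$ and no cancellation is needed. Let $u_i$ be the element of $\pi_1([0,1]^3\setminus\gamma')$ read along the $i$th inserted band, so that $\alpha_i=u_i^{-1}\alpha_i'u_i$ and $\lambda_i'=u_i\lambda_iu_i^{-1}$. Because the band complement in the lower half-cube retracts onto loops encircling the pairs of endpoints at the bottom of $\gamma'$, the element $u_i$ is a product of conjugates of $\beta_j=[\lambda_j',\alpha_j']$, i.e.\ of commutators of a longitude of $\gamma'$ with its own meridian. The hypothesis on $\gamma'$ gives $E_Y(\lambda_j')=1+\mathcal{O}(k)$, hence $E_Y(\beta_j)=1+\mathcal{O}(k+1)$ and $E_Y(u_i)=1+\mathcal{O}(k+1)$: the conjugator sits one degree deeper than a longitude, which is the precise form of your ``antiparallel $0$-framed parallels sit one degree deeper'' claim. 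From this, $E_Y(\alpha_i)=1+Y_i+\mathcal{O}(k+2)$, so rewriting the word $\lambda_i$ (whose expansion starts in degree $k$ by the hypothesis on $\gamma$) in the primed meridians perturbs its expansion only in degree $\geq k+(k+2)-1=2k+1$; and the outer conjugation $u_i(\cdot)u_i^{-1}$ contributes cross terms of degree $\geq (k+1)+k=2k+1$. Without identifying the conjugators as products of conjugates of the commutators $[\lambda_j',\alpha_j']$ (or an equivalent precise justification of the extra degree), the proof is not complete.
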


\begin{proof}
Denote respectively by $\alpha_i, \lambda_i$ (resp. $\alpha_i', \lambda_i'$) the $i$th meridian and $i$th longitude of $\gamma$ (resp. $\gamma'$) for $1\leq i\leq n$.
Let $E_X$ (resp. $E_Y$) be the Magnus expansion in non-commutative variables $X_1,\ldots ,X_n$ (resp. $Y_1,\ldots ,Y_n$) obtained by replacing $\alpha_j$ by $1+X_j$ (resp. $\alpha_j'$ by $1+Y_j$) and replacing $\alpha_j^{-1}$ by $1-X_j+X_j^2-X_j^3+\cdots$(resp. $\alpha_j'^{-1}$ by $1-Y_j+Y_j^2-Y_j^3+\cdots$) for $1\leq j \leq n$.
Let $u_{i}$ be the $i$th longitude of a string link which gives the SL-move, see Figure~\ref{bandbottomtangle}.

\begin{figure}[htbp]
  \begin{center}
    \begin{overpic}[width=10cm]{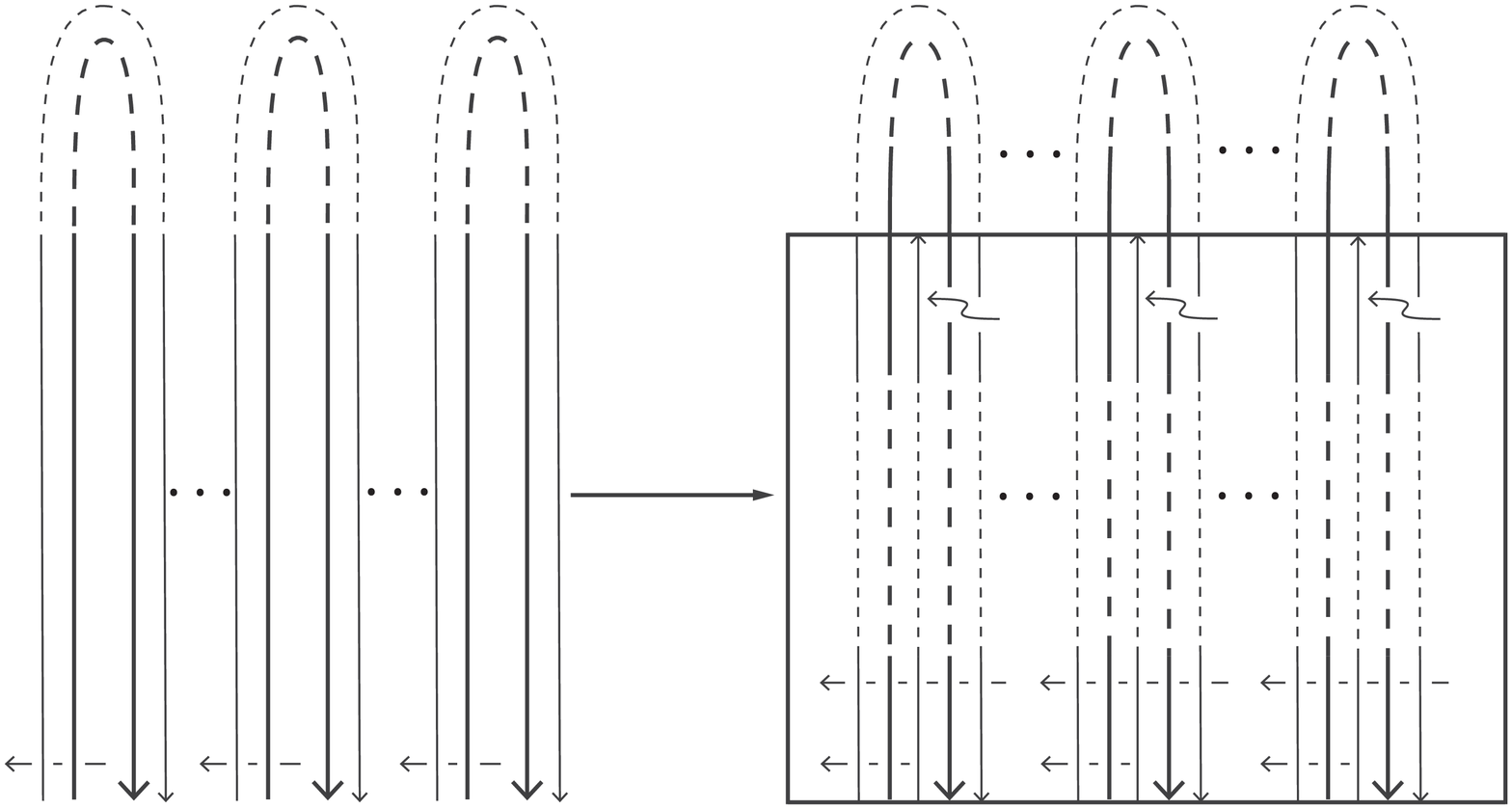}
    \linethickness{3pt}
    \put(53,-10){$\gamma$}
    \put(211,-10){$\gamma'$}
    \put(107,65){SL-move}
    \put(15,158){$\lambda_{1}$}
    \put(52,158){$\lambda_{i}$}
    \put(88,158){$\lambda_{n}$}
    \put(168,158){$\lambda'_{1}$}
    \put(211,158){$\lambda'_{i}$}
    \put(251,158){$\lambda'_{n}$}
    \put(-5,13){$\alpha_{1}$}
    \put(33,13){$\alpha_{i}$}
    \put(69,13){$\alpha_{n}$}
    \put(150,13){$\alpha'_{1}$}
    \put(192,13){$\alpha'_{i}$}
    \put(232,13){$\alpha'_{n}$}
    \put(190,90){$u_{1}$}
    \put(230,90){$u_{i}$}
    \put(272,90){$u_{n}$}
    \put(150,30){$\beta_{1}$}
    \put(192,30){$\beta_{i}$}
    \put(232,30){$\beta_{n}$}
    \end{overpic}
    \caption{}
    \label{bandbottomtangle}
  \end{center}
\end{figure}

Then we have $\alpha_i=u_i^{-1}\alpha_i'u_i$ and $\lambda_i'=u_i\lambda_iu_i^{-1}$, where $\alpha_{i}$ and $\lambda_{i}$ are assumed to be elements of $\pi_{1}([0,1]^{3}\setminus \gamma')$.
Let $\beta_{i}=[ \lambda_i', \alpha_i']$ as illustrated in Figure~\ref{bandbottomtangle}.
By the assumption,
Milnor numbers for $\gamma$ and $\gamma'$ of length $\leq k$ vanish,
so $E_X(\lambda_i)$ and $E_Y(\lambda'_i)$ can be written respectively in the form
\[
E_X(\lambda_i)=1+f_i(X)
~\text{and}~
E_Y(\lambda'_i)=1+f'_i(Y),
\]
where $f_i(X)$ and $f'_i(Y)$ mean the terms of degree $\geq k$.

First, we observe $E_Y(\alpha_i)$.
Let $E_Y({\lambda'_i}^{-1})=1+\overline{f'_i}(Y)$,
where $\overline{f'_i}(Y)$ is the terms of degree $\geq k$.
Note that
\[E_Y({\lambda'_i}^{-1})E_Y(\lambda'_i)=(1+\overline{f'_i}(Y))(1+f'_i(Y))=1.\]
It follows that 
\[\begin{array}{rcl}
E_Y(\beta_i)&=&E_Y([ \lambda_i', \alpha_i'])\\
&=&E_Y({\lambda'_i}^{-1}{\alpha'_i}^{-1}\lambda'_i\alpha'_i)\\
&=&(1+\overline{f'_i}(Y))E_Y({\alpha'_i}^{-1})(1+f'_i(Y))E_Y(\alpha'_i)\\
&=&(1+\overline{f'_i}(Y))(1+f'_i(Y)+\mathcal{O}(k+1))\\
&=&1+\mathcal{O}(k+1),
\end{array}\]
where $\mathcal{O}(m)$ means the terms of degree $\geq m$.
Since $u_i$ is represented by a product of the conjugates of $\beta_{i}$,
we have
$E_Y(u_i)=1+\mathcal{O}(k+1)$
and set 
\[E_Y(u_i)=1+g_i(Y)~\text{and}~E_Y(u_i^{-1})=1+\overline{g_i}(Y),\]
where $g_i(Y)$ and $\overline{g_i}(Y)$ mean the terms of degree $\geq k+1$.
As $\alpha_i=u_i^{-1}\alpha'_iu_i$
we have
\[\begin{array}{rcl}
E_Y(\alpha_i)&=&E_Y(u_i^{-1}\alpha'_iu_i)\\
&=&(1+\overline{g_i}(Y))(1+Y_i)(1+g_i(Y))\\
&=&1+(1+\overline{g_i}(Y))Y_i(1+g_i(Y))\\
&=&1+Y_i+\mathcal{O}(k+2).
\end{array}\]
Hence \[E_Y(\alpha_i)=E_Y(\alpha_i')+\mathcal{O}(k+2).\]

Now we observe $E_Y(\lambda'_{i})-(1+f_i(Y))$.
Since $\lambda_{i}$ is represented by a word of $\alpha_{1}^{\pm1},\alpha_{2}^{\pm1},\ldots,\alpha_{n}^{\pm1}$, then $E_Y(\lambda_{i})$ is obtained from the word by substituting $\alpha_{i}^{\pm1}$ for $E_Y(\alpha_{i}^{\pm1})=E_Y({\alpha'_i}^{\pm1})+\mathcal{O}(k+2)$.
Therefore
\[E_Y(\lambda_{i})-(1+f_{i}(Y))=\mathcal{O}(k+(k+2)-1)
=\mathcal{O}(2k+1).\]
It follows that
\[\begin{array}{rcl}
E_Y(\lambda_i')&=&E_Y(u_i\lambda_iu_i^{-1})\\
&=&(1+g_{i}(Y))(1+f_{i}(Y)+\mathcal{O}(2k+1))(1+\overline{g_{i}}(Y))\\
&=&1+f_{i}(Y)+\mathcal{O}(2k+1).
\end{array}\]
This completes the proof.
\end{proof}

\begin{remark}
Let $\gamma, \gamma'$ be oriented ordered $4$-component bottom tangles illustrated in Figure~\ref{2k+2}.
Note that $\gamma'$ is obtained from $\gamma$ by a SL-move. 
By the definition, $\mu_{\gamma}(j)=\mu_{\gamma'}(j)=0$ for any sequence $j$ with $|j|=1(=k)$.
By Lemma \ref{lemma2}, $\mu_{\gamma}(J)=\mu_{\gamma'}(J)$ for any sequence $J$ with $|J|\leq 3(=2k+1)$.
However, by easy calculus, $\mu_{\gamma}(1234)=0\neq 1=\mu_{\gamma'}(1234)$ for the sequence 1234 of the length 4(=2k+2).
Therefore Lemma \ref{lemma2} (Theorem \ref{well-definedness}) generally dose not hold in the case of the length $2k+2$ or more.
\end{remark}

\begin{figure}[htbp]
  \begin{center}
    \begin{overpic}[width=10cm]{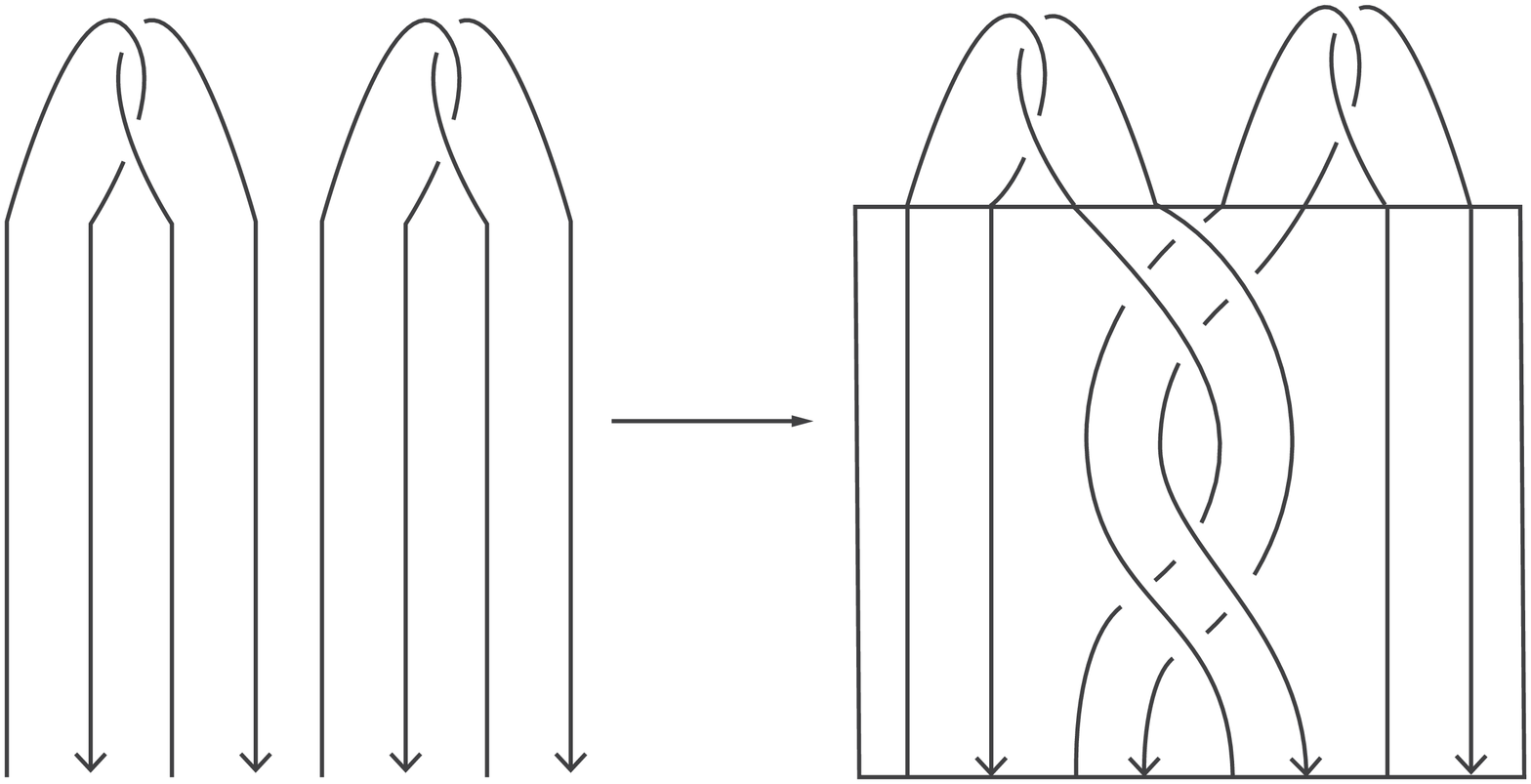}
    \linethickness{3pt}
    \put(50,-8){$\gamma$}
    \put(217,-8){$\gamma'$}
    \put(113,70){SL-move}
    \put(6,140){1}
    \put(37,140){2}
    \put(65,140){3}
    \put(95,140){4}
    \put(175,140){1}
    \put(206,140){2}
    \put(232,140){3}
    \put(264,140){4}
    \end{overpic}
    \caption{An example of Lemma \ref{lemma2} (Theorem \ref{well-definedness}) dose not hold in the case of the length $2k+2$ or more}
    \label{2k+2}
  \end{center}
\end{figure}

By using the above results, we prove Theorem \ref{well-definedness}.
\begin{proof}[Proof of Theorem \ref{well-definedness}]
By Proposition \ref{propPB-move}, any two disk/band surfaces $F_{c}$ and $F'_{c}$ of an $n$-component clover link $c$ are transformed into each other by the moves (a) and (b) illustrated in Figure \ref{PB-move}.
So two bottom tangles $\gamma_{F_{c}}$ and $\gamma_{F'_{c}}$ are transformed into each other by a SL-move.
Since the both closures $L(\gamma_{F_{c}})$ and $L(\gamma_{F'_{c}})$ are ambient isotopic to $l_{c}$ and the hypothesis of Lemma~\ref{lemma2}, 
\[0=\omu_{l_{c}}(J)=\mu_{\gamma_{F_{c}}}(J)=\mu_{\gamma_{F'_{c}}}(J)\]
 for any sequence $J$ with $|J|\leq k.$
Hence by Lemma \ref{lemma2}, $\mu_{\gamma_{F_{c}}}(I)=\mu_{\gamma_{F'_{c}}}(I)$ for any sequence $I$ with $|I|\leq 2k+1$.
This completes the proof.
\end{proof}
\section{Edge-homotopy for clover links}
\subsection{Claspers}
Let us briefly recall from \cite{H} the basic notations of clasper theory.
In this paper, we essentially only need the notation of $C_k$-tree.
For a general definition of claspers, we refer the reader to \cite{H}.
\begin{definition}
Let $L$ be a (clover) link in $S^{3}$ (resp. a tangle in $[0,1]^{3}$). 
An embedded disk $T$ in $S^{3}$ (resp. $[0,1]^{3}$) is called a {\it tree clasper} for $L$ if it satisfies the following (1), (2) and (3):\\
(1) $T$ is decomposed into disks and bands, called {\it edges}, each of which connects two distinct disks.\\
(2) The disks have either 1 or 3 incident edges, called {\it disk-leaves} or {\it nodes} respectively.\\
(3) $L$ intersects $T$ transversely and the intersections are contained in the union of the interior of the disk-leaves.\\
The {\it degree} of a tree clasper is the number of the disk-leaves minus 1.
(In \cite{H}, a tree clasper is called a {\it strict tree clasper}.)
A degree $k$ tree clasper is called a {\it $C_k$-tree}.
A $C_k$-tree is {\it simple} if each disk-leaf intersects $L$ at one point.
\end{definition}

We will make use of the drawing convention for claspers of \cite[Fig. 4]{H}.
Given a $C_k$-tree $T$ for $L$, there is a procedure to construct a framed link $\gamma(T)$ in a regular neighborhood of $T$.
{\it Surgery along $T$} means surgery along $\gamma(T)$.
Since surgery along $\gamma(T)$ preserves the ambient space, surgery along the $C_k$-tree $T$ can be regarded as a local move on $L$ in $S^{3}$ (resp. $[0,1]^{3}$).
We say that the {\it resulting one $L_T$ in $S^{3}$ {\rm (}resp. $[0,1]^{3}${\rm )} is obtained from $L$ 
by surgery along $T$}.
Similarly, for a disjoint union of trees $T_1 \cup \cdots \cup T_m$, we can define $L_{T_1\cup \cdots \cup T_m}$.
Two (string) links (resp. a clover link) are {\it $C_{k}$-equivalence} if they are transformed into each other by surgery along $C_{k}$-trees.

The relation between $C_{k}$-equivalence and Milnor invariants is known as follows.
\begin{theorem}\cite[Theorem 7.2]{H}
\label{thmH}
Milnor invariants of length $\leq k$ for {\rm (}string{\rm )} links are invariants of $C_k$-equivalence. 
\end{theorem}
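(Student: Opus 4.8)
The plan is to reduce to a single elementary move and then read off the effect on the Milnor numbers from the lower central series of the link group. First, by the calculus of claspers \cite{H}, $C_k$-equivalence is generated by surgeries along \emph{simple} $C_k$-trees, so it suffices to prove that if $L'$ is obtained from a string link $L$ by surgery along one simple $C_k$-tree $T$, then $\mu_{L'}(I)=\mu_L(I)$ for every sequence $I$ with $|I|\le k$. The case of links then follows by passing to a bottom-tangle representative of $L$ and isotoping $T$ off the closure arcs; here one only gets invariance of the residue classes $\omu$, but that is automatic because $\Delta_L(I)$ depends only on Milnor numbers of length strictly less than $|I|$, which are already known to be preserved.

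Next I would compute the effect of the $T$-surgery on the peripheral system. After an ambient isotopy one may assume $T$ lies in a small ball that meets $L$ in $k+1$ trivial strands, one running through each disk-leaf, so that the surgery modifies $L$ only inside this ball by inserting a standard ``$C_k$'' tangle. Tracking meridians and longitudes through this local picture --- which is exactly the bookkeeping organised by the clasper calculus of \cite{H} --- one finds that the meridians $\alpha_1,\dots,\alpha_n$ generating $G/G_q$ (with $G=\pi_1([0,1]^3\setminus L)$) are unchanged, while each longitude $\lambda_j$ is replaced by $\lambda_j w_j$ with $w_j$ a product of conjugates of a weight-$k$ iterated commutator in the meridians hit by $T$; in particular $w_j\in G_k$, the $k$th lower central subgroup.

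Finally, apply the Magnus expansion $E$. Since $w_j\in G_k$, the standard degree property of $E$ gives $E(w_j)=1+\mathcal{O}(k)$, where $\mathcal{O}(m)$ denotes terms of degree $\ge m$ (compare the degree estimates in the proof of Lemma~\ref{lemma2}). Hence
\[ E(\lambda_j w_j)=E(\lambda_j)\bigl(1+\mathcal{O}(k)\bigr)=E(\lambda_j)+\mathcal{O}(k), \]
so every coefficient of a monomial of degree $\le k-1$ in $E(\lambda_j)$ is unchanged. As $\mu_L(i_1\cdots i_{m-1}j)$ is by definition the coefficient of $X_{i_1}\cdots X_{i_{m-1}}$ in $E(\lambda_j)$, this yields $\mu_{L'}(I)=\mu_L(I)$ for all $I$ with $|I|=m\le k$, as desired.

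The main obstacle is the middle step: justifying that $w_j\in G_k$ and that the meridians are genuinely unaffected in the relevant nilpotent quotient. One must normalise the $C_k$-tree (make all disk-leaves simple and route its edges into a ball) without changing its degree, deal with the case in which several leaves meet the same component of $L$ (so the commutator is in meridians with repetitions, but still of weight $k$), and check that the conjugations forced by the local model do not drop the commutator weight. This is precisely the content of the clasper calculus of \cite{H}, which is why the statement is quoted from there rather than proved here.
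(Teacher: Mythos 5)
The paper offers no proof of this statement: it is quoted verbatim from Habiro (\cite{H}, Theorem 7.2), so there is no internal argument to compare yours against. Your outline is the standard proof and is essentially sound. The reduction of $C_k$-equivalence to surgery along a single simple $C_k$-tree, the fact that such a surgery multiplies each longitude by an element $w_j$ of the $k$th lower central subgroup $G_k$ (a product of conjugates of weight-$k$ commutators in the meridians of the $k$ other leaves, the tree having $k+1$ leaves in total) while fixing the meridians in $G/G_q$, and the Magnus-expansion estimate $E(w_j)=1+\mathcal{O}(k)$ are exactly the ingredients of Habiro's argument. Your degree bookkeeping is also correct: a length-$m$ Milnor number is the coefficient of a degree-$(m-1)$ monomial in $E(\lambda_j)$, so perturbing $E(\lambda_j)$ by terms of degree $\geq k$ leaves every length $\leq k$ untouched (consistent, for instance, with delta moves --- $C_2$-surgeries --- preserving linking numbers). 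The one step you explicitly defer, namely that the local surgery model really acts on the peripheral system by $\lambda_j\mapsto\lambda_j w_j$ with $w_j\in G_k$, is the genuinely technical content, and you rightly identify it as what \cite{H} supplies. The only point I would tighten is your aside about honest links: the claim that $\Delta_L(I)$ is controlled because it involves only shorter sequences needs an induction on $|I|$, since those shorter Milnor numbers are themselves only defined modulo their own indeterminacies; this is standard but worth saying. Since the paper itself simply cites Habiro, your proposal is, if anything, more explicit than what the paper provides.
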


A $C_k$-tree $T$ for a (string) link (resp. a clover link) $L$ is a {\it self $C_{k}$-tree} if all disk-leaves of $T$ intersect the same component (resp. the same spatial edge) of $L$.
The {\it self $C_{k}$-equivalence} is an equivalence relation generated by surgery along self $C_{k}$-trees.
We remark that self $C_{1}$-equivalence for a (string) link (resp. a clover link) is the same relation as link-homotopy (resp. edge-homotopy).

\subsection{Edge-homotopy for clover links}
The (edge-homotopy+$C_{k}$)-equivalence is an equivalence relation obtained by combining edge-homotopy and $C_{k}$-equivalence.
\begin{theorem}
\label{mainthm}
Let $c, c'$ be two $n$-clover links and $l_{c}, l_{c'}$ links which are disjoint unions of 
leaves of $c, c'$ respectively.
Suppose that  $\omu_{l_{c}}(J)= \omu_{l_{c'}}(J)=0$ for any sequence $J$ with $|J| \leq k$. 
Then $c$ and $c'$ are {\rm (}edge-homotopy$+C_{2k+1}${\rm )}-equivalence if and only if $\mu_c(I)= \mu_{c'}(I)$ for any non-repeated sequence $I$ with $|I| \leq 2k+1$.
\end{theorem}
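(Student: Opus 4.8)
The plan is to prove the two implications separately, treating the ``only if'' direction as a direct consequence of known invariance results and putting the real work into the ``if'' direction via clasper theory. First I would establish the ``only if'' part. If $c$ and $c'$ are (edge-homotopy$+C_{2k+1}$)-equivalent, then there is a disk/band surface argument showing that one may realize this equivalence as a sequence of self $C_1$-trees on spatial edges (edge-homotopy moves) and $C_{2k+1}$-trees, and I would pull these back through a common-type construction of bottom tangles $\gamma_{F_c}$ and $\gamma_{F_{c'}}$. Under the hypothesis $\omu_{l_c}(J)=\omu_{l_{c'}}(J)=0$ for $|J|\le k$, Theorem~\ref{well-definedness} makes $\mu_c(I)$ and $\mu_{c'}(I)$ well-defined for $|I|\le 2k+1$, so it suffices to check that each elementary move fixes every $\mu_c(I)$ with $I$ non-repeated and $|I|\le 2k+1$. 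A self $C_1$-tree on an edge $f_i$ changes only crossings among strands of $L_i$ in $L(\gamma_{F_c})$, hence leaves $\mu(I)$ for non-repeated $I$ unchanged (this is the bottom-tangle analogue of Milnor's link-homotopy invariance, and follows because a non-repeated Milnor number is a homotopy invariant of the link). A $C_{2k+1}$-tree preserves all Milnor numbers of length $\le 2k+1$ by Theorem~\ref{thmH}. This gives the forward implication.

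For the ``if'' direction, I would argue as follows. Fix common disk/band surfaces so that both bottom tangles are defined; by Theorem~\ref{well-definedness} the Milnor numbers $\mu_c(I)$, $\mu_{c'}(I)$ are honest invariants for $|I|\le 2k+1$, and by hypothesis they agree on all non-repeated $I$ with $|I|\le 2k+1$. I would then realize the difference between $\gamma_{F_c}$ and $\gamma_{F_{c'}}$ by claspers: any two bottom tangles with the same closure up to the relevant moves differ by surgery along a disjoint union of $C_k$-trees for various $k$, and after repeatedly applying the standard clasper moves (the ``move 8, move 10, move 12'' type calculus from \cite{H}) one may assume the trees are simple and organized by degree. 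The strategy is an induction on the degree $d$ of the lowest-degree tree present: working modulo (edge-homotopy$+C_{2k+1}$)-equivalence, I want to cancel all $C_d$-trees with $d\le 2k$. For $d\le 2k$, a simple $C_d$-tree all of whose leaves lie on the \emph{same} spatial edge is a self $C_d$-tree, which for $d\ge 1$ is generated by self $C_1$-trees and hence is absorbed by edge-homotopy; so only $C_d$-trees touching at least two distinct spatial edges can contribute, and to these one applies the standard correspondence (as in Habiro's and Yasuhara's work on $C_k$-equivalence and Milnor invariants) between equivalence classes of such trees and Milnor numbers of length $d+1$. Because the relevant Milnor numbers of length $\le 2k+1$ coincide for $c$ and $c'$, all these non-self trees of degree $\le 2k$ can be cancelled in pairs (using the antisymmetry, IHX, and ``leaf slide'' relations) modulo edge-homotopy and modulo trees of strictly higher degree. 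Iterating up to degree $2k$ leaves only $C_{2k+1}$-trees, which by definition means $c$ and $c'$ are (edge-homotopy$+C_{2k+1}$)-equivalent.

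The technical heart, and the step I expect to be the main obstacle, is the degree-$d$ cancellation for $k<d\le 2k$: one must show that when two clover links have identical Milnor numbers of all lengths up to $2k+1$, the obstruction living in degree $d$ (a quotient of the free abelian group on $C_d$-trees by IHX, antisymmetry, and the self-tree relations) is detected \emph{exactly} by the length-$(d+1)$ Milnor numbers, and that the vanishing of that obstruction is precisely what lets one remove the $C_d$-trees. Making this precise requires (i) a careful bookkeeping lemma showing that surgery along a simple $C_d$-tree changes the length-$(d+1)$ Milnor number by the expected ``tree-counting'' amount and leaves lower-length Milnor numbers fixed — this is where the well-definedness range $2k+1$ is used, since the surgeries are performed on bottom tangles whose length-$\le k$ Milnor numbers vanish — and (ii) a realization lemma showing every element of the degree-$d$ obstruction group with prescribed length-$(d+1)$ Milnor numbers is realized by an explicit family of $C_d$-trees, so that $c$ and $c'$ can be brought together one degree at a time. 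The remaining arguments (the forward implication, the reduction to simple trees, the self-tree-to-edge-homotopy absorption) are routine applications of \cite{H} and Theorem~\ref{thmH} once this core lemma is in place.
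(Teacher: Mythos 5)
Your ``only if'' direction is essentially the paper's: reduce to the bottom tangles via a disk/band surface argument (this is Lemma \ref{lemma3} in the paper), then invoke link-homotopy invariance of non-repeated Milnor numbers together with Theorem \ref{thmH}. That part is fine.

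In the ``if'' direction, however, there is a genuine gap, and you have located it yourself: the entire content of the theorem sits in the degree-$d$ cancellation step, and your proposal only states what would need to be proved there (the ``bookkeeping lemma'' and the ``realization lemma'') without proving it. As written, the argument is circular at its core: you assert that the degree-$d$ obstruction group is ``detected exactly'' by length-$(d+1)$ Milnor numbers, which is essentially a restatement of the theorem one degree at a time. The paper avoids building this machinery from scratch by converting $c,c'$ to bottom tangles (Lemma \ref{lemma3}: $c$ and $c'$ are (edge-homotopy$+C_m$)-equivalent iff suitable $\gamma_{F_c},\gamma_{F_{c'}}$ are (link-homotopy$+C_m$)-equivalent) and then citing Yasuhara's normal form theorem (\cite[Theorem 4.3]{Y}, Theorem \ref{thmY} in the paper): every string link is link-homotopic to an explicit product $sl_1*\cdots*sl_{n-1}$ of clasper-surgery string links $V_\pi^{x_\pi}$ indexed by \emph{injections} $\pi$, with exponents $x_\pi$ determined recursively by the non-repeated Milnor numbers. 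Truncating this product at degree $m-1$ with $m=\min\{n,2k+1\}$ costs only $C_{2k+1}$-equivalence, and equality of the Milnor numbers of length $\le 2k+1$ forces the truncated normal forms of $\gamma_{F_c}$ and $\gamma_{F_{c'}}$ to coincide factor by factor. That theorem is precisely the packaged form of your two missing lemmas; without citing it (or an equivalent), your proof is incomplete.

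A second, smaller gap: your case analysis of simple $C_d$-trees is ``all leaves on one edge (self-tree, absorbed by edge-homotopy)'' versus ``touching at least two distinct edges (controlled by non-repeated Milnor numbers).'' This misses the trees with a \emph{repeated but not constant} leaf pattern, e.g.\ leaves meeting edges $1,1,2$. Such a tree is not a self-tree, yet its surgery is invisible to all non-repeated Milnor numbers, so your claimed detection statement cannot hold for it. These trees must be disposed of by the separate standard fact that surgery along a tree clasper with two disk-leaves on the same component does not change the link-homotopy class (this is implicit in the reduction to injections $\pi\in\mathcal{F}_{i+1}$ in Theorem \ref{thmY}). You need to state and use that fact explicitly before your dichotomy is exhaustive.
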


Let $c$ be a clover link and $l_{c}$ a link which is the disjoint union of leaves of $c$.
Since the union of stems of $c$ and the root is contractible,
we may assume that a $C_{k}$-tree $T$ for $c$ satisfies $T\cap c=T\cap l_{c}$.
By the zip construction~\cite{H} for $T$, $T$ becomes a disjoint union of simple $C_{k}$-trees for $l_{c}$.
Combining this and \cite[Lemma 1.2]{FY}, 
for $n\leq m$, if two $n$-clover links are $C_{m}$-equivalence, then they are 
self $C_{1}$-equivalence. 
Hence by Theorem~\ref{mainthm}, we have the following corollary.

\begin{corollary}
\label{cor}
Let $c, c'$ be two $n$-clover links and $l_{c}, l_{c'}$ links which are disjoint unions of 
leaves of $c, c'$ respectively.
Suppose that $\omu_{l_{c}}(J)= \omu_{l_{c'}}(J)=0$ for any sequence $J$ with $|J| \leq n/2$. 
Then $c$ and $c'$ are edge-homotopic if and only if $\mu_c(I)= \mu_{c'}(I)$ for any non-repeated sequence $I$ with $|I| \leq n$.
\end{corollary}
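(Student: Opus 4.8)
The plan is to obtain the Corollary as an immediate consequence of Theorem~\ref{mainthm}, once one verifies that on $n$-clover links the relations (edge-homotopy$+C_{2k+1}$)-equivalence and edge-homotopy coincide for the relevant choice of $k$.

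First I would set $k=\lfloor n/2\rfloor$. With this $k$ the hypothesis $\omu_{l_c}(J)=\omu_{l_{c'}}(J)=0$ for all sequences $J$ with $|J|\leq n/2$ is exactly the hypothesis of Theorem~\ref{mainthm}, which then asserts that $c$ and $c'$ are (edge-homotopy$+C_{2k+1}$)-equivalent if and only if $\mu_c(I)=\mu_{c'}(I)$ for every non-repeated sequence $I$ with $|I|\leq 2k+1$. Now a non-repeated sequence with entries in $\{1,\ldots,n\}$ has length at most $n$, and $2k+1\geq n$ for $k=\lfloor n/2\rfloor$; hence the conditions ``$|I|\leq 2k+1$'' and ``$|I|\leq n$'' pick out the same family of non-repeated sequences, and the Milnor-number condition in Theorem~\ref{mainthm} is literally the one in the Corollary. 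It remains only to show that for $n$-clover links (edge-homotopy$+C_{2k+1}$)-equivalence is the same relation as edge-homotopy.

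One inclusion is trivial. For the other, since an (edge-homotopy$+C_{2k+1}$)-equivalence is by definition a finite alternation of edge-homotopies and surgeries along $C_{2k+1}$-trees, it is enough to show that surgery along a single $C_{2k+1}$-tree on an $n$-clover link $c$ yields a clover link edge-homotopic to $c$. Following the remark made just before the statement: given a $C_{2k+1}$-tree $T$ for $c$, use that the union of the stems and the root is contractible to isotope $T$ so that $T\cap c=T\cap l_c$, and then apply the zip construction of~\cite{H} to replace $T$ by a disjoint union of simple $C_{2k+1}$-trees for the leaf link $l_c$. Each such simple tree has $2k+2>n$ disk-leaves distributed among the $n$ leaves of $c$, so by the pigeonhole principle two of its disk-leaves meet the same leaf; by~\cite[Lemma~1.2]{FY} surgery along a $C_{2k+1}$-tree of this type is realized by self $C_1$-moves, that is, by crossing changes supported on a single spatial edge, hence by an edge-homotopy. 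Performing this for every piece shows that $c_T$ is edge-homotopic to $c$, which identifies the two equivalence relations and finishes the proof.

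I expect the only real obstacle to lie in this last paragraph, namely in making rigorous that an arbitrary (non-flat) $C_{2k+1}$-tree can be isotoped into the position $T\cap c=T\cap l_c$ and that the zip construction together with~\cite[Lemma~1.2]{FY} indeed reduces surgery along the resulting simple trees to edge-homotopy. The application of Theorem~\ref{mainthm} and the pigeonhole count are routine.
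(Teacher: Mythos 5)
Your proposal is correct and follows essentially the same route as the paper: the paper's own justification (in the paragraph preceding the corollary) likewise invokes the contractibility of the stems and root to arrange $T\cap c=T\cap l_c$, the zip construction, and \cite[Lemma 1.2]{FY} to conclude that $C_m$-equivalence of $n$-clover links implies edge-homotopy for $m\geq n$, and then applies Theorem~\ref{mainthm} with $2k+1\geq n$. Your explicit pigeonhole count and the observation that non-repeated sequences have length at most $n$ are exactly the implicit steps in the paper's argument.
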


By the definition, the Milnor $\omu$-invariant of length $1$ is always zero.
If $n=3$, then Corollary \ref{cor} is not necessary the condition.
\begin{corollary}
\label{corn=3}
Two 3-clover links $c$ and $c'$ are edge-homotopic if and only if $\mu_{c}(I)~=~\mu_{c'}(I)$ for any non-repeated sequence $I$ with $|I|\leq 3$.
\end{corollary}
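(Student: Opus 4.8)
The plan is to derive Corollary~\ref{corn=3} as a special case of Corollary~\ref{cor}, together with the observation that the hypothesis on $\overline{\mu}$-invariants is vacuous when $n=3$. First I would set $n=3$ in Corollary~\ref{cor}. The condition there requires $\omu_{l_{c}}(J)=\omu_{l_{c'}}(J)=0$ for every sequence $J$ with $|J|\leq n/2 = 3/2$, i.e.\ for every sequence of length $1$ (since lengths are positive integers, $|J|\leq 3/2$ forces $|J|=1$). But by the definition recalled in Subsection~\ref{Milnor invariants}, $\mu_{\gamma}(j)=0$ for any length-one sequence $j$, and consequently $\omu_{L}(j)=0$ for every link $L$ and every $j$ with $|j|=1$. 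Hence the hypothesis of Corollary~\ref{cor} is automatically satisfied for any pair of $3$-clover links, and no assumption is needed.

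Next I would read off the conclusion of Corollary~\ref{cor} with $n=3$: two $3$-clover links $c$ and $c'$ are edge-homotopic if and only if $\mu_{c}(I)=\mu_{c'}(I)$ for every non-repeated sequence $I$ with $|I|\leq n = 3$. This is exactly the statement of Corollary~\ref{corn=3}, so the proof is complete once the vacuity of the hypothesis is recorded.

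I do not anticipate a genuine obstacle here; the only point requiring a word of care is the inequality bookkeeping $n/2 = 3/2 < 2$, which is why only length-one sequences enter and why those contribute nothing. It is worth noting explicitly, as the remark after Corollary~\ref{cor} in the introduction already does, that ``the Milnor $\omu$-invariant of length $1$ is always zero'' is precisely what makes the hypothesis disappear in the case $n=3$; for $n\geq 4$ the bound $n/2\geq 2$ brings in length-two invariants (linking numbers of the leaves) and the hypothesis is then a real restriction.

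\begin{proof}
Apply Corollary~\ref{cor} with $n=3$. Its hypothesis asks that $\omu_{l_{c}}(J)=\omu_{l_{c'}}(J)=0$ for every sequence $J$ with $|J|\leq n/2 = 3/2$; since the length of a sequence is a positive integer, this means only $|J|=1$. By the definition of the Milnor number in Subsection~\ref{Milnor invariants}, $\mu_{\gamma}(j)=0$ for every length-one sequence $j$, so $\omu_{L}(j)=0$ for every link $L$. Thus the hypothesis of Corollary~\ref{cor} holds for any two $3$-clover links $c$ and $c'$ with no further assumption. Its conclusion, specialized to $n=3$, states that $c$ and $c'$ are edge-homotopic if and only if $\mu_{c}(I)=\mu_{c'}(I)$ for any non-repeated sequence $I$ with $|I|\leq 3$, which is the assertion of the corollary.
\end{proof}
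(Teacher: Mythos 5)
Your proposal is correct and matches the paper's own reasoning: the paper deduces Corollary~\ref{corn=3} from Corollary~\ref{cor} by the same observation that for $n=3$ the hypothesis only concerns sequences of length $\leq 3/2$, i.e.\ length $1$, and the Milnor $\omu$-invariant of length $1$ vanishes by definition. Your write-up simply makes explicit the bookkeeping that the paper leaves as a one-line remark.
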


\begin{example}
\label{example}
Let $c$ and $c'$ are two $n$-clover links as described in Figure~\ref{non-edge-homotopic}.
Let $l_{c}, l_{c'}$ be links that are disjoint unions of leaves of $c, c'$ respectively.
It is clear that $l_{c}$ and $l_{c'}$ are ambient isotopic.
However, $c$ and $c'$ are not edge-homotopic by Corollary \ref{cor} because $\mu_{c}(123)=1\neq0=\mu_{c'}(123)$.
\end{example}

\begin{figure}[htbp]
  \begin{center}
    \begin{overpic}[width=10cm]{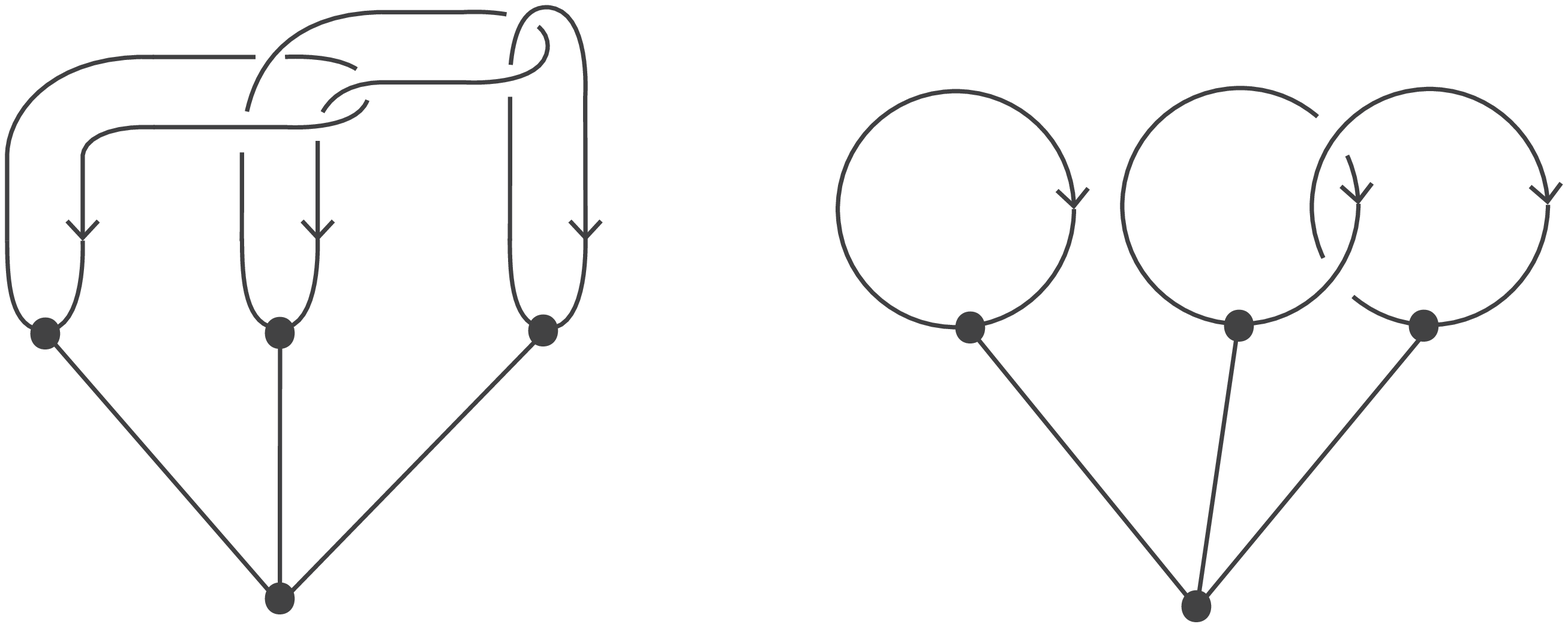}
    \linethickness{3pt}
    \put(-14,85){$l_{c}$}
    \put(138,85){$l_{c'}$}
    \put(5,131){1}
    \put(48,131){2}
    \put(96,131){3}
    \put(173,117){1}
    \put(223,117){2}
    \put(260,117){3}
    \put(49,5){$c$}
    \put(216,5){$c'$}
    \end{overpic}
    \caption{An example of two clover links which are not edge-homotopic to each other}
    \label{non-edge-homotopic}
  \end{center}
\end{figure}

\section{Proof of Theorem \ref{mainthm}}
In order to prove Theorem~\ref{mainthm}, we need 
the following lemma and a theorem given by \cite{Y}.
\begin{lemma}
\label{lemma3}
Let $c, c'$ be two clover links.
Then $c$ and $c'$ are {\rm (}edge-homotopy$+C_k${\rm )}-equivalence if and only if there exist disk/band surfaces $F_c, F_{c'}$ of $c, c'$ respectively such that the bottom tangles $\gamma_{F_c}$ and $\gamma_{F_{c'}}$ are {\rm (}link-homotopy$+C_k${\rm )}-equivalence.
\end{lemma}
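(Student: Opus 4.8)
The plan is to prove both implications by translating, operation by operation, the elementary moves that generate the two equivalence relations, exploiting throughout the freedom to replace the disk/band surfaces. On the clover-link side, (edge-homotopy$+C_{k}$)-equivalence is generated by ambient isotopy, crossing changes supported on a single spatial edge, and surgeries along $C_{k}$-trees; on the bottom-tangle side, (link-homotopy$+C_{k}$)-equivalence is generated by the analogous three moves: ambient isotopy, crossing changes supported on a single component, and surgeries along $C_{k}$-trees. The two inputs that do the real work are Proposition~\ref{propPB-move} --- which, exactly as in the proof of Theorem~\ref{well-definedness}, tells us that replacing one disk/band surface of a fixed clover link by another is realized by a pure-braid SL-move on the associated bottom tangle, and conversely --- together with the clasper observation made after Theorem~\ref{mainthm}, namely that since the stems and the root form a contractible subgraph, any $C_{k}$-tree meeting $c$ can be isotoped so as to meet $c$ only along the leaves $l_{c}$.

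First I would fix, for every clover link, the normalized disk/band surface whose $i$th band is a thin annular neighbourhood of the $i$th leaf $e_{i}$; then the $i$th component of $\gamma_{F_{c}}$ is a parallel push-off of $e_{i}$ (with a short excursion near the root disk), and its closure is $e_{i}$. With this choice the two kinds of crossing-change move correspond directly: after making the band thin near the crossing, a crossing change on the spatial edge $e_{i}$ is exactly a crossing change on the $i$th component of $\gamma_{F_{c}}$, the normalized surface of the modified clover link being the thin-band surface of the modified leaf, and vice versa. A $C_{k}$-tree $T$ for $c$ is first moved off the stems and root by the observation above, and then, since $\gamma_{F_{c}}$ runs parallel to $l_{c}$, its disk-leaves are slid from $l_{c}$ onto $\gamma_{F_{c}}$; the resulting $C_{k}$-tree for $\gamma_{F_{c}}$ has the same surgery effect, and the thin-band surface of $c_{T}$ is carried to the surgered bottom tangle. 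The reverse passage (from $\gamma_{F_{c}}$ back to $c$) is the same, sliding disk-leaves off the closure arcs $a_{i}$. Finally, an ambient isotopy of $c$ is matched, via Proposition~\ref{propPB-move}, with a pure-braid SL-move on the bottom tangle --- transport the surface by the isotopy and re-normalize its disk part --- and conversely.

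Given this dictionary, both implications become bookkeeping. For the forward implication, take a sequence $c=c_{0}\to c_{1}\to\cdots\to c_{m}=c'$ of ambient isotopies, single-edge crossing changes and $C_{k}$-surgeries, start from a normalized $F_{c_{0}}$, and propagate a disk/band surface $F_{c_{j}}$ along the sequence using the three cases above; each $\gamma_{F_{c_{j}}}$ is then obtained from $\gamma_{F_{c_{j-1}}}$ by an ambient isotopy, a single-component crossing change, or a $C_{k}$-surgery, so $\gamma_{F_{c}}$ and $\gamma_{F_{c_{m}}}=\gamma_{F_{c'}}$ are (link-homotopy$+C_{k}$)-equivalent. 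For the converse, given disk/band surfaces $F_{c},F_{c'}$ with $\gamma_{F_{c}}$ and $\gamma_{F_{c'}}$ (link-homotopy$+C_{k}$)-equivalent, lift the connecting sequence: each crossing change on the $i$th component becomes a crossing change on $e_{i}$ (equivalently on the band core), each $C_{k}$-tree is pushed back onto the clover link, and each ambient isotopy of bottom tangles is matched with an ambient isotopy of $c$; the endpoints are $c$ and $c'$.

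The step I expect to be the main obstacle is keeping the disk/band surface under control across all three operations at once --- concretely, showing that the pure-braid SL-moves that inevitably appear (when a surface is transported through an ambient isotopy, or when the surface handed on by one step is not the normalized one expected by the next) are genuinely harmless. A pure-braid SL-move on the bottom tangle need not be an ambient isotopy, so one must check that its counterpart on the clover link --- a braiding of the stems inside the contractible root region --- is itself realized by an ambient isotopy of $c$ (or, failing that, is absorbed into (edge-homotopy$+C_{k}$)-equivalence), and, conversely, that any surface discrepancy encountered is indeed such an SL-move, so that Proposition~\ref{propPB-move} applies. This is precisely where the normalization "band $=$ thin neighbourhood of leaf" and the precise content of Proposition~\ref{propPB-move} --- that the entire ambiguity of disk/band surfaces consists of full twists on bands together with a single pure-braid move --- carry the argument; the remainder is routine.
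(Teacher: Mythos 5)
Your overall strategy---transfer the generating moves between the clover link and the bottom tangle, using that the stems and root form a contractible subgraph and that $\gamma_{F_c}$ runs parallel to the leaves---is the same as the paper's, and your translations of single-edge crossing changes and of $C_k$-surgeries are sound. The gap is precisely the step you flag at the end, and neither of your proposed resolutions works for the forward implication. A pure-braid SL-move on a bottom tangle is \emph{not} harmless and is \emph{not} in general absorbed into (link-homotopy$+C_k$)-equivalence: the two bottom tangles of Figure~\ref{2k+2} differ by an SL-move yet have $\mu_{\gamma}(1234)=0\neq 1=\mu_{\gamma'}(1234)$, a link-homotopy invariant of bottom tangles, so they are not even link-homotopic; more generally, the failure of SL-moves to preserve Milnor numbers is the entire reason Theorem~\ref{well-definedness} needs its vanishing hypothesis. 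Since your setup fixes a preferred normalized surface for every clover link and formulates the three elementary translations only for normalized surfaces, each hand-off between consecutive steps forces a re-normalization, i.e.\ an SL-move on the bottom-tangle side, and at that point the chain of (link-homotopy$+C_k$)-moves you are building breaks. (Your observation that the SL-move is harmless on the \emph{clover-link} side is correct, which is why your converse direction survives; it is the forward direction that is damaged.)

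The repair is to exploit that the lemma is an existence statement and never re-normalize, which is what the paper does. It first consolidates the entire (edge-homotopy$+C_k$) sequence into surgery along a single disjoint union $G$ of self $C_1$-trees and $C_k$-trees with $c'=c_G$ (standard clasper calculus; this also disposes of the intermediate ambient isotopies you have to track move by move), chooses an arbitrary $F_c$ meeting $G$ in a controlled way, and simply \emph{defines} $F_{c'}$ to be the surgered surface, corrected by full twists on bands; then $\gamma_{F_{c'}}=(\gamma_{F_c})_G$ on the nose, $G$ is a union of self $C_1$-trees and $C_k$-trees for $\gamma_{F_c}$, and no SL-move ever appears. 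For the converse the paper arranges $c\setminus N(S)=\gamma_{F_c}$ and $c'\setminus N(S)=\gamma_{F_{c'}}$, where $S$ is the union of the stems and the root, so that the trees realizing the (link-homotopy$+C_k$)-equivalence of the bottom tangles are literally self $C_1$-trees and $C_k$-trees for the clover links. If you reformulate your crossing-change and surgery translations so that they apply to an arbitrary propagated surface rather than to the normalized one, your argument closes up and becomes a move-by-move version of the paper's proof.
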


\begin{proof}
We first prove the sufficient condition.
By the assumption, there exists a disjoint union $G$ of self $C_1$-trees and $C_k$-trees for $c$ such that $c'=c_G$. 
For $c, c'$, let $F_c, F_{c'}$ be disk/band surfaces of $c, c'$ respectively. 
We may assume that $F_c\cap G$ is contained in the interior of $G$.
Then we have $\partial F_{c'}=(\partial F_c)_G$ if necessary adding full-twists to bands of $F_c$. 
Therefore $\gamma_{F_{c'}}=(\gamma_{F_c})_G$ and $G$ consists of self $C_{1}$-trees and $C_{k}$-trees for $\gamma_{F_{c}}$.

We next prove the necessary condition.
Let $S$ be the union of stems and the root of $c$, and let $N(S)$ be the regular neighborhood of $S$.
Then we may assume that
$S$ is equal to the union of stems and the root of $c'$
and $c\setminus N(S)=\gamma_{F_c}$.
Similarly $c'\setminus N(S)=\gamma_{F_{c'}}$.
By the assumption, $\gamma_{F_c}$ and $\gamma_{F_{c'}}$ are
 (link-homotopy$+C_k$)-equivalence. This completes the proof.
\end{proof}

Let $\pi :\{ 1,\ldots ,k\} \rightarrow \{ 1,\ldots ,n\} (k \leq n)$ be an injection such that $\pi(1)<\pi(i)<\pi(k) (i\in \{2,\ldots ,k-1\})$, and ${\mathcal F}_k$ be the set of such injections.
For $\pi \in {\mathcal F}_k$, let $T_{\pi}$ and $\overline{T}_{\pi}$ be simple $C_{k-1}$-trees as illustrated in Figure \ref{Ck-trees}, and set $V_{\pi}$ (resp. $V_{\pi}^{-1}$) be a string link obtained from the $n$-component trivial string link by surgery along $T_{\pi}$ (resp. $\overline{T}_{\pi}$).
Here, Figure~\ref{Ck-trees} are the images of homeomorphisms from the neighborhood of $T_{\pi}$ and $\overline{T}_{\pi}$ to $B^{3}$.
\begin{figure}[htbp]
  \begin{center}
     \begin{overpic}[width=10cm]{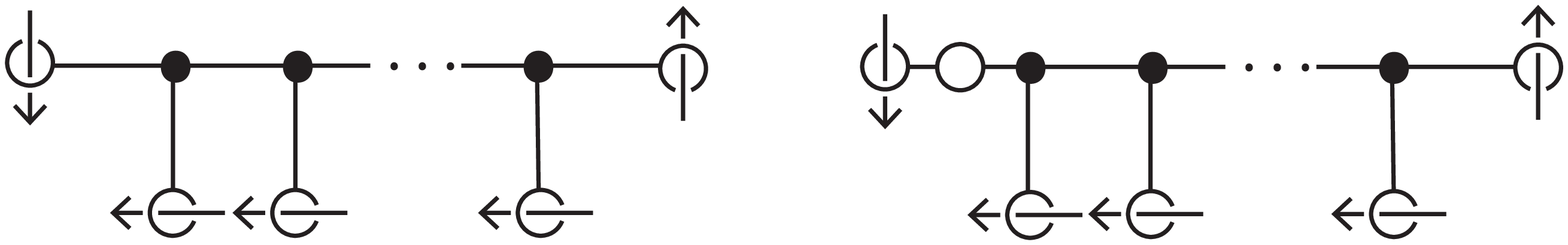}
    \linethickness{3pt}
    \put(60,50){$T_{\pi}$}
    \put(215,50){$\overline{T}_{\pi}$}
    \put(171.7,29.5){$s$}
    \put(-4,50){$\pi(1)$}
    \put(22,-8){$\pi(2)$}
    \put(45,-8){$\pi(3)$}
    \put(78,-8){$\pi(k-1)$}
    \put(115,50){$\pi(k)$}
    \put(151,50){$\pi(1)$}
    \put(176,-8){$\pi(2)$}
    \put(200,-8){$\pi(3)$}
    \put(232,-8){$\pi(k-1)$}
    \put(271,50){$\pi(k)$}
    \end{overpic}
    \caption{}
    \label{Ck-trees}
  \end{center}
\end{figure}

\begin{theorem}\cite[Theorem 4.3]{Y}.
\label{thmY}
Let $sl$ be an $n$-component string link.
Then $sl$ is link-homotopic to $sl_1*sl_2*\cdots*sl_{n-1}$, where
\begin{eqnarray*}
sl_i&=&\prod \limits_{\pi \in {\mathcal F}_{i+1}} V_{\pi}^{x_{\pi}}, \\
x_{\pi}&=&
\begin{cases}
\mu_{sl}(\pi(1)\pi(2)) &\text{$(i=1)$},\\
\mu_{sl}(\pi(1)\ldots \pi(i+1))-\mu_{sl_1*\cdots *sl_{i-1}}(\pi(1)\ldots \pi(i+1)) &\text{$(i\geq 2)$}.
\end{cases}
\end{eqnarray*}
\end{theorem}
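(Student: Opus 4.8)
The plan is to prove this normal-form statement by induction on $i$, resting on two pillars of the link-homotopy theory of string links. By Habegger and Lin~\cite{HL}, the family of Milnor numbers $\mu_{sl}(I)$ over all non-repeated sequences $I$ is a complete link-homotopy invariant for $n$-component string links, and for such links every non-repeated sequence has length at most $n$. By Theorem~\ref{thmH}, surgery along a $C_i$-tree preserves all Milnor numbers of length $\le i$. Writing $P_0$ for the trivial string link and $P_i=sl_1*\cdots*sl_i$, I would show by induction on $i$ that $\mu_{P_i}(I)=\mu_{sl}(I)$ for every non-repeated sequence $I$ with $|I|\le i+1$. Setting $i=n-1$ then makes $P_{n-1}$ and $sl$ agree on all non-repeated Milnor numbers, so they are link-homotopic, as claimed.

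For the inductive step, note first that each $V_\pi$ with $\pi\in\mathcal{F}_{i+1}$ comes from surgery along a simple $C_i$-tree on the trivial string link, so $\mu_{V_\pi}(J)=0$ for $|J|\le i$; hence $sl_i=\prod_\pi V_\pi^{x_\pi}$ has vanishing Milnor numbers of length $\le i$ as well, and $P_i=P_{i-1}*sl_i$ is obtained from $P_{i-1}$ by surgery along a disjoint union of $C_i$-trees. Theorem~\ref{thmH} then gives $\mu_{P_i}(I)=\mu_{P_{i-1}}(I)=\mu_{sl}(I)$ for $|I|\le i$ by the inductive hypothesis. At length $i+1$, because $sl_i$ and each $V_\pi$ have trivial Magnus terms below degree $i$, multiplication of longitudes makes the first non-vanishing Milnor numbers additive, so $\mu_{P_i}(I)=\mu_{P_{i-1}}(I)+\sum_{\pi\in\mathcal{F}_{i+1}}x_\pi\,\mu_{V_\pi}(I)$ for $|I|=i+1$.

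The heart of the argument is the orthogonality relation $\mu_{V_{\pi'}}\!\left(\pi(1)\pi(2)\cdots\pi(i+1)\right)=\delta_{\pi\pi'}$ for $\pi,\pi'\in\mathcal{F}_{i+1}$. When the images of $\pi$ and $\pi'$ differ this is immediate, since $T_{\pi'}$ only meets the components indexed by $\pi'$; when the images coincide I would invoke the standard dictionary between simple linear $C_i$-trees and iterated commutators of meridians, reading the canonical sequence $\pi(1)\cdots\pi(i+1)$ (smallest entry first, largest last) off the leaf-order of $T_\pi$. A dimension count underpins this: for a fixed $(i+1)$-element index set the number of $\pi\in\mathcal{F}_{i+1}$ with that image is $(i-1)!$, matching the rank of the non-repeated length-$(i+1)$ Milnor numbers modulo the cyclic and shuffle relations. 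Granting orthogonality and evaluating the additive formula on $I=\pi'(1)\cdots\pi'(i+1)$ gives $\mu_{P_i}(I)=\mu_{P_{i-1}}(I)+x_{\pi'}$, and imposing $\mu_{P_i}(I)=\mu_{sl}(I)$ solves for $x_{\pi'}$, reproducing exactly the recursion in the statement (with $P_0$ trivial in the base case $i=1$). Agreement on the canonical sequences then propagates to every non-repeated length-$(i+1)$ sequence: the cyclic and shuffle relations express an arbitrary such Milnor number through the canonical ones plus correction terms built from Milnor numbers of length $\le i$, and the latter already coincide for $P_i$ and $sl$.

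The main obstacle I expect is precisely this orthogonality computation with its attendant dimension count, namely pinning down the exact Milnor numbers realized by $T_\pi$ and $\overline{T}_\pi$ and confirming that the sequences indexed by $\mathcal{F}_{i+1}$ form a genuine set of representatives for the relation-reduced invariants, so that matching on them is equivalent to matching on all non-repeated sequences of that length. The additivity of first non-vanishing Milnor numbers under stacking and the bookkeeping of the recursion are then routine.
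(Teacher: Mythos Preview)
The paper does not contain a proof of this statement: Theorem~\ref{thmY} is quoted verbatim from \cite[Theorem~4.3]{Y} and used as a black box in the proof of Theorem~\ref{mainthm}. There is therefore no in-paper proof to compare your proposal against.

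That said, your outline is a reasonable reconstruction of the standard argument and is in the spirit of Yasuhara's original proof in \cite{Y}. The overall architecture---induction on $i$ with $P_i=sl_1*\cdots*sl_i$, using Habegger--Lin completeness of non-repeated Milnor numbers together with the $C_i$-invariance from Theorem~\ref{thmH}, and additivity of first-non-vanishing Milnor numbers under stacking---is sound. You have also correctly located the essential content: the orthogonality $\mu_{V_{\pi'}}(\pi(1)\cdots\pi(i+1))=\delta_{\pi,\pi'}$ and the fact that the canonical sequences indexed by $\mathcal{F}_{i+1}$ represent a full set of independent length-$(i{+}1)$ non-repeated Milnor numbers. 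In \cite{Y} these are handled via the clasper calculus (the tree $T_\pi$ contributes exactly the iterated commutator $[\alpha_{\pi(1)},[\alpha_{\pi(2)},\ldots,[\alpha_{\pi(i)},\alpha_{\pi(i+1)}]\ldots]]$ to the relevant longitude), together with the Hall-basis count you allude to.

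One point to tighten: in your additivity step you need only that $sl_i$ has vanishing Milnor numbers of length $\le i$, not $P_{i-1}$; the stacking formula for longitudes then gives $\mu_{P_{i-1}*sl_i}(I)=\mu_{P_{i-1}}(I)+\mu_{sl_i}(I)$ for $|I|=i{+}1$ directly, since the correction terms involve products of lower-degree Milnor numbers of $sl_i$, all of which vanish. Also, be careful invoking ``cyclic relations'' for string-link Milnor numbers: the cyclic symmetry is a feature of the link invariants $\overline{\mu}$, not of string-link $\mu$; what you actually need is that the non-repeated $\mu$ of length $i{+}1$ modulo the shuffle (antisymmetry/Jacobi) relations have rank $\binom{n}{i+1}(i-1)!$, matching $|\mathcal{F}_{i+1}|$.
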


By using the results above, we prove Theorem \ref{mainthm}.
\begin{proof}[Proof of Theorem \ref{mainthm}]
We first prove the sufficient condition.
Since $c$ and $c'$ are (edge-homotopy$+C_{2k+1}$)-equivalence, by Lemma \ref{lemma3}, there exist disk/band surfaces $F_c, F_{c'}$ of $c, c'$ respectively such that the $n$-component bottom tangles $\gamma_{F_c}$ and $\gamma_{F_{c'}}$ are (link-homotopy+$C_{2k+1}$)-equivalence.
Since the Milnor number with a non-repeated sequence is an invariant of link-homotopy \cite{HL}, by Theorem \ref{thmH}, $\mu_c(I)= \mu_{c'}(I)$ with a non-repeated sequence $I$ with $|I| \leq 2k+1$.

Let us prove the necessary condition.
Let $F_{c}, F_{c'}$ be disk/band surfaces of $c, c'$ respectively and let be the $\gamma_{F_{c}}, \gamma_{F_{c'}}$ be $n$-component bottom tangles.
By Theorem \ref{thmY}, two string links which correspond to $\gamma_{F_{c}}$ and $\gamma_{F_{c'}}$ are link-homotopic to $sl_{1}*sl_{2}*\cdots *sl_{n-1}$ and $sl'_{1}*sl'_{2}*\cdots *sl'_{n-1}$ respectively.
Both $sl_{i}$ and $sl'_{i}$ are $C_{i}$-equivalent to an $n$-component trivial string link.
Set $m=\min\{ n, 2k+1\} $, by Theorem~\ref{thmY}, 
the two string links which correspond to $\gamma_{F_{c}}$ and $\gamma_{F_{c'}}$ are (link-homotopy+$C_{2k+1}$) to $sl_{1}*sl_{2}*\cdots *sl_{m-1}$ and $sl'_{1}*sl'_{2}*\cdots *sl'_{m-1}$ respectively.
We recall that if $n\leq 2k+1$, $C_{2k+1}$-equivalence implies link-homotopy. 
By the assumption, since $\mu_{\gamma_{F_{c}}}(I)=\mu_{\gamma_{F_{c'}}}(I)$ for any non-repeated sequence $I$ with $|I|\leq 2k+1$, we have $sl_{i}=sl'_{i}$ for each $i~(\leq m-1)$.
This implies $\gamma_{F_c}$ and $\gamma_{F_{c'}}$ are (link-homotopy$+C_{2k+1}$)-equivalence.
Lemma \ref{lemma3} completes the proof.
\end{proof}


\end{document}